\begin{document}
\newtheorem{theo}{Theorem}[section]
\newtheorem{defi}[theo]{Definition}
\newtheorem{lemm}[theo]{Lemma}
\newtheorem{prop}[theo]{Proposition}
\newtheorem{rem}[theo]{Remark}
\newtheorem{exam}[theo]{Example}
\newtheorem{cor}[theo]{Corollary}
\newtheorem{proper}[theo]{Property}
\newcommand{\mat}[4]{
    \begin{pmatrix}
           #1 & #2 \\
           #3 & #4
      \end{pmatrix}
   }
\def\Z{\mathbb{Z}} %Integer numbers
\def\R{\mathcal{R}} %Rings obtained by linear combination of matrices
\def\I{\mathcal{I}} %Ideals
\def\C{\mathbb{C}} %Complex numbers
\def\N{\mathbb{N}} %Natural numbers
\def\PP{\mathbb{P}} %Projective space
\def\Q{\mathbb{Q}} %Rational numbers
\def\L{\mathcal{L}} %Transfer operator
\def\ol{\overline} %overline
\def\bs{\backslash} %backslash
\def\part{P} %partition

\newcommand{\gcD}{\mathrm {\ gcd}} %greatest common divisor
\newcommand{\End}{\mathrm {End}} %Endomorphisms
\newcommand{\Aut}{\mathrm {Aut}} %Automorphisms
\newcommand{\GL}{\mathrm {GL}} %General linear group
\newcommand{\SL}{\mathrm {SL}} %Special linear group
\newcommand{\PSL}{\mathrm {PSL}} %Projective linear group
\newcommand{\Mat}{\mathrm {Mat}} %Matrices
\newcommand\ga[1]{\overline{\Gamma}_0(#1)} %The congruence group
\newcommand\pro[1]{\mathbb{P}1(\mathbb{Z}_{#1})} %The arithmetic projective
% line
\newcommand\Zn[1]{\mathbb{Z}_{#1}}
\newcommand\equi[1]{\stackrel{#1}{\equiv}}
\newcommand\pai[2]{[#1:#2]} %the elements of the projective spaces
\newcommand\modulo[2]{[#1]_#2} %calculation of the first number modulo
                             %the second number
\newcommand\sah[1]{\lceil#1\rceil} %The upper integral part of #1
\def\sol{\phi} % a solution of the Lewis equation
\begin{center}
{\LARGE\bf Nonrigidity  of piecewise-smooth circle maps} \footnote{ MSC: 37E10, 37C15,
37C40

Keywords and phrases. Circle homeomorphism, break point, rotation
number, invariant measure, conjugation map,
singular function
} \\
\vspace{.25in} { \large {Habibulla Akhadkulov\footnote{School of Mathematical Sciences Faculty of Science and Technology University Kebangsaan Malaysia, 43600 UKM Bangi, Selangor Darul Ehsan, Malaysia.  \quad E-mail: akhadkulov@yahoo.com, msn@ukm.my},
 Akhtam Dzhalilov\footnote{Turin Polytechnic University, Kichik Halka yuli 17,  Tashkent 100095, Uzbekistan. \quad E-mail: a\_dzhalilov@yahoo.com},
}Mohd Salmi Md. Noorani$^{2}$}
%{\tt (Version JH - August 8, 2003)}
\end{center}

%------------------------------------------------------------------------
%\documentclass[a4paper,10pt]{article}
%\documentclass{amsart}
%\usepackage{graphicx}
%\usepackage{amsfonts}
%\usepackage{amsmath}
%\usepackage{amsthm}
%opening

\title{Nonrigidity  of piecewise smooth circle maps }
\begin{abstract}
Let $f_{i},$  $i=1,2$ be piecewise-smooth  $C^{1}$  circle homeomorphisms with two
break points,  $\log Df_{i},$  $i=1,2$ are absolutely continuous on each continuity intervals
 of $Df_{i}$ and $D\log Df_{i}\in L^{p}$ for some $p>1.$
 Suppose, the jump ratios of $f_{1} $ and   $f_{2} $ at their break points
  do not coincide but have the same total jumps (i.e. the product of jump ratios) and identical irrational rotation
 number of bounded type.  Then the conjugation  $h$ between $f_{1} $ and   $f_{2} $ is a singular function, i.e. it is continuous on $S^1,$ but $Dh(x)=0$ a.e. with respect to Lebesgue measure.
\end{abstract}
\section{Introduction}
This work continues and in some sense completes our study of conjugations between circle homeomorphisms with break type singularities.
Let $S^{1}=\mathbb{R}/\mathbb{Z}$ with clearly defined orientation, metric, Lebesgue measure
and the operation of addition be the \emph{unit circle}. Let $\pi:\mathbb{R}\rightarrow S^{1}$ denote the corresponding
projection mapping that "winds" a straight line on the circle. An arbitrary homeomorphism $f$ that preserves the orientation
of the unit circle $S^{1}$ can "be lifted" on the straight line $\mathbb{R}$ in the form of the homeomorphism
$L_{f}:\mathbb{R}\rightarrow\mathbb{R}$ with property $L_{f}(x+1)=L_{f}(x)+1$ that is connected with $f$ by relation
$\pi\circ L_{f}=f\circ\pi.$ This homeomorphism $L_{f}$ is called the \emph{lift}
of the homeomorphism $f$ and is defined up to an integer term. The most important arithmetic characteristic
 of the homeomorphism $f$ of the unit circle $S^{1}$ is the \emph{rotation number}
 $$
 \rho(f)=\lim_{i\rightarrow\infty}\frac{L^{i}_{f}(x)}{i}\mod1,
 $$
where $L_{f}$ is the lift of $f$ with $S^{1}$ to $\mathbb{R}.$
Here and below, for a given map $F,$ $F^{i}$ denotes its $i$-th iteration.
The classical  Denjoy's theorem  states \cite{De1932}, that
if $f$ is a circle diffeomorphism with irrational rotation number
$\rho=\rho(f)$ and $\log Df$ is of bounded variation, then $f$
is conjugate to the linear rotation $f_{\rho}:x\rightarrow x+\rho\mod 1$, that is, there
exists a unique (up to additional constant) homeomorphism $\varphi$ of the circle
with $f=\varphi^{-1} \circ f_{\rho} \circ \varphi.$
Since the conjugating map
$\varphi$ and the unique $f$-invariant measure $\mu_{f}$ are related by
$\varphi(x)=\mu_{f}([0,x])$ (see \cite{CFS1982}), regularity
properties of the conjugating map $\varphi$ imply
corresponding properties of the density of the absolutely
continuous invariant measure $\mu_{f}$.
 This problem of smoothness of the conjugacy of smooth diffeomorphisms is  now
very well understood (see for instance
\cite{Ar1961,Mo1966,He1979,KO1989.1,KO1989.2,KS1989,Yo1984}).

 A natural extension of  diffeomorphisms of the circle are piecewise-smooth homeomorphisms with break points, that is,
maps that are smooth everywhere except for several singular points at which the first derivative has a jump.
 Notice that Denjoy's result can be extended to circle
homeomorphisms with break points. Below we present the exact statement of the corresponding theorem.
The regularity properties of invariant measures of such maps are quite different from the case
  diffeomorphisms. Namely, invariant measure of  piecewise-smooth circle homeomorphisms with break points and
with irrational rotation number is singular w.r.t. Lebesgue measure (see \cite{DK1998, DL2006, DLM2009, DMS2012}). In this case,
the conjugacy $\varphi$ between $f$ and linear rotation $f_{\rho}$ is
singular function.
Here naturally arises the question on regularity of conjugacy between two circle maps with  break points.
Consider two piecewise-smooth circle  homeomorphisms $f_{1}$, $f_{2}$ which has
break points with the same order on the circle  and
 the same irrational rotation numbers.
On what conditions is the conjugacy  between two such  homeomorphisms  smooth? This is
the rigidity problem for circle homeomorphisms with  break points.
Denote by $\sigma_{f}(b):=Df_{-}(b)/Df_{+}(b)$
the \emph{jump ratio} or \emph{jump} of $f$ at the break point $b.$
The case of circle maps with one break point and  the same jump ratios were studied in detail by K. Khanin and D. Khmelev \cite{KhKm2003},
A. Teplinskii and K. Khanin \cite{TK2004}.
Let $\rho=1/\left(k_{1}+ 1/\left( k_{2}+...+1/\left(k_{n}+...\right) \right)
\right):=[k_1,k_2,\ldots,k_n,\ldots)$ be the
continued fraction expansion of the irrational rotation number
$\rho.$ Define
$$
M_o=\{\rho:\exists C>0, \forall n\in \mathbb{N},\, k_{2n-1}\leq
C\},\,\,\,\,
M_e=\{\rho:\exists C>0, \forall n\in \mathbb{N},\, k_{2n }\leq
C\}.
$$
We formulate the main result of \cite{TK2004}.
 \begin{theo}\label{Tep Kh}
Let  $f_{i}\in C^{2+\alpha}(S^{1}\backslash\{{b_{i}}\}),  \ i=1,2$, $\alpha>0$ be two circle homeomorphisms with one break
point that have the same jump ratio $\sigma$ and the
same irrational rotation number $\rho\in (0, 1).$ In addition, let one of the following
restrictions be true:  either $\sigma> 1$ and $\rho\in M_e$ or $\sigma<1$ and $\rho\in M_o$. Then the map $h$ conjugating the homeomorphisms $f_ 1$ and
$f_2$ is a $C^1$-diffeomorphism.
\end{theo}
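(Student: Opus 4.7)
The plan is to establish that $h$ is a $C^{1}$-diffeomorphism via a renormalization argument: the rescaled iterates of $f_{1}$ and $f_{2}$ near their break points converge exponentially to a \emph{common} M\"obius pair, and from this matching it follows that the difference quotients of $h$ converge uniformly to a continuous positive limit. After normalizing $h$ so that $h(b_{1})=b_{2}$, I would introduce for each $f_{i}$ the standard dynamical partition $\mathcal{P}_{n}^{(i)}$ of level $n$ generated by the orbit of $b_{i}$; it consists of $q_{n}+q_{n-1}$ arcs, where $p_{n}/q_{n}$ are the continued-fraction convergents of $\rho$. Because $f_{1}$ and $f_{2}$ share the rotation number $\rho$, the conjugacy $h$ sends $\mathcal{P}_{n}^{(1)}$ to $\mathcal{P}_{n}^{(2)}$ atom by atom.

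I would then prove a cross-ratio distortion estimate. On each $\Delta\in\mathcal{P}_{n}^{(i)}$ the first $q_{n}$ iterates of $f_{i}$ avoid the break $b_{i}$; since $\log Df_{i}\in C^{1+\alpha}$ off $b_{i}$, the Denjoy--Koksma inequality together with the cross-ratio tool yields a uniform H\"older bound
\[
\bigl|\log Df_{i}^{\,q_{n}}(x)-\log Df_{i}^{\,q_{n}}(y)\bigr|\le K\,|x-y|^{\alpha},\qquad x,y\in\Delta,
\]
with $K$ independent of $n$. Together with the fact that the total jump of $Df_{i}^{\,q_{n}}$ at $b_{i}$ equals $\sigma$, this places the renormalized pairs in a precompact family and guarantees uniform control across the break.

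Next comes the core of the argument: renormalization convergence. Rescaling $f_{i}^{\,q_{n}}$ and $f_{i}^{\,q_{n-1}}$ to the atom of $\mathcal{P}_{n}^{(i)}$ abutting $b_{i}$ gives a commuting pair $(A_{n}^{(i)},B_{n}^{(i)})$ on a normalized two-interval configuration. Using the contraction of the renormalization operator on the space of $C^{2+\alpha}$ pairs with a single break, I would show that $(A_{n}^{(i)},B_{n}^{(i)})$ converges exponentially fast to a M\"obius pair determined by $\sigma$ and the tail of the continued fraction of $\rho$. The parity hypothesis enters here decisively: for $\sigma>1$ with $\rho\in M_{e}$ (resp.\ $\sigma<1$ with $\rho\in M_{o}$), the break points of $f_{1}$ and $f_{2}$ track each other through the renormalization tower, so the limits for $i=1$ and $i=2$ coincide and one obtains $\bigl\|(A_{n}^{(1)},B_{n}^{(1)})-(A_{n}^{(2)},B_{n}^{(2)})\bigr\|$ bounded by an exponentially small quantity in $n$.

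Finally, set $\xi_{n}(\Delta)=|h(\Delta)|/|\Delta|$ for $\Delta\in\mathcal{P}_{n}^{(1)}$. The matching of the renormalization limits plus the distortion estimate gives $\max_{\Delta}\xi_{n}/\min_{\Delta}\xi_{n}\to 1$ exponentially; since the atoms of $\mathcal{P}_{n}^{(1)}$ shrink exponentially (using the bounded type of $\rho$ on the relevant parity), the piecewise-constant functions $\xi_{n}$ converge uniformly to a continuous positive limit on $S^{1}$, which must be $Dh$, so $h\in C^{1}(S^{1})$ with nowhere-vanishing derivative. The main obstacle is the renormalization step: proving exponential contraction of the renormalization operator near the fixed-point M\"obius family and, more importantly, showing that $f_{1}$ and $f_{2}$ are attracted to the \emph{same} M\"obius pair. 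The parity condition is essential precisely here, since otherwise the break points of the two maps end up on opposite sides of the renormalization interval at alternate steps, and the sign of $\log\sigma$ steers them toward genuinely different M\"obius limits, at which point the length-ratio analysis above fails.
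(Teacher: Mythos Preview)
The paper does not contain a proof of this theorem: it is quoted verbatim as ``the main result of \cite{TK2004}'' (Teplinskii--Khanin) and serves only as background for the paper's own singularity result, Theorem~\ref{ADM1}. So there is no ``paper's own proof'' to compare your proposal against.

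That said, your outline is in the spirit of the actual Khanin--Khmelev/Teplinskii--Khanin argument: one does renormalize the commuting pairs at the break point, prove exponential convergence to a one-parameter M\"obius family determined by $\sigma$ and the tail of the continued fraction, and then read off $C^{1}$-smoothness of $h$ from the matching of renormalization limits. Two points deserve sharpening. First, the H\"older distortion bound you state, $|\log Df_{i}^{q_{n}}(x)-\log Df_{i}^{q_{n}}(y)|\le K|x-y|^{\alpha}$ on a single atom, is not what Denjoy--Koksma gives directly; the correct estimate is in terms of the \emph{relative} position of $x,y$ inside the dynamical partition (sums of $|f^{j}(x)-f^{j}(y)|^{\alpha}$ over $0\le j<q_{n}$), and without bounded type one does not get a uniform H\"older exponent in the original scale. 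Second, your description of how the parity hypothesis enters is slightly off: it is not that the breaks ``track each other'' --- after normalizing $h(b_{1})=b_{2}$ they coincide at every renormalization step regardless of parity. The role of $\rho\in M_{e}$ (resp.\ $M_{o}$) is rather to ensure that along the parity on which $\sigma$ acts expansively the partial quotients $k_{n}$ stay bounded, so that the hyperbolicity of the renormalization operator on the M\"obius family is uniform and the exponential convergence survives; without this the contraction rate can degenerate along a subsequence. You correctly flag the renormalization contraction as the crux, and indeed that step occupies the bulk of \cite{KhKm2003,TK2004} and cannot be reduced to the short sketch you give.
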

In the case homeomorphisms with different jump ratios the following theorem was proved by A. Dzhalilov, H. Akin, S. Temir in \cite{DHT2010}.
\begin{theo}\label{Dz.H.T.}
Let  $f_{i}\in C^{2+\alpha}(S^{1}\backslash\{{b_{i}}\}),  \ i=1,2$, $\alpha>0$  be two circle homeomorphisms with one break
point that have  different jump ratio and the same irrational rotation number $\rho\in (0, 1).$
Then the map $h$ conjugating the homeomorphisms $f_ 1$ and $f_2$ is a singular function.
\end{theo}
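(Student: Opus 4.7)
The plan is to prove that $Dh=0$ Lebesgue-a.e., which, for the monotone circle homeomorphism $h$, is equivalent to $h$ being a singular function. I would proceed in three stages: (i) standard setup and bounded distortion along iterates; (ii) identification of a metric mismatch between the dynamical partitions of $f_1$ and $f_2$ driven by $\sigma_1\ne\sigma_2$; and (iii) a Lebesgue density-point contradiction ruling out $Dh>0$ on a positive-measure set.

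For (i), Denjoy's theorem in the $\log Df_i\in\mathrm{BV}$ setting makes $f_1$ and $f_2$ both conjugate to the rotation by $\rho$, and hence to each other via a monotone homeomorphism $h$, which is therefore differentiable Lebesgue-a.e.\ with $0\le Dh<\infty$. Because $b_i$ is the unique non-$C^1$ point of $f_i$, dynamical rigidity forces $h(b_1)=b_2$, and more generally $h$ carries the $n$-th dynamical partition $P_n^{(1)}$ generated by the orbit of $b_1$ under $f_1$ bijectively onto $P_n^{(2)}$. The Denjoy--Koksma inequality, applied to the bounded-variation function $\log Df_i$, produces a uniform constant $K>0$ with $K^{-1}\le Df_i^{q_n}(x)\le K$ for every $n$ and every $x$ outside the break orbit. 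Consequently $f_i^{q_n}$ has bounded distortion on each element of $P_n^{(i)}$, and the average Jacobian $|h(\Delta)|/|\Delta|$ on an element $\Delta\in P_n^{(1)}$ is comparable, up to a universal constant, to $Dh(x)$ at any $x\in\Delta$ where $Dh$ exists.

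For (ii), the key input is that the metric structure of $P_n^{(i)}$ near $b_i$ — in particular, ratios of adjacent partition-element lengths such as $|[b_i,f_i^{q_n}(b_i)]|/|[f_i^{-q_{n-1}}(b_i),b_i]|$ — is governed by a renormalization scheme whose asymptotic behavior depends explicitly on $(\rho,\sigma_i)$. The rescaled first-return maps near $b_i$ converge, at least along subsequences, to fractional-linear transformations whose parameters determine universal cluster values for these ratios, and these cluster values differ for $i=1,2$ whenever $\sigma_1\ne\sigma_2$. Since $h$ must map $P_n^{(1)}$ onto $P_n^{(2)}$, the ratios $|h(\Delta_n)|/|\Delta_n|$ for a suitable sequence of elements $\Delta_n\in P_n^{(1)}$ sitting near the orbit of $b_1$ are forced to oscillate between at least two distinct positive values as $n\to\infty$, and hence do not converge.

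For (iii), suppose for contradiction that $E:=\{x:Dh(x)>0\}$ has positive Lebesgue measure and pick a Lebesgue density point $x_0\in E$. By Lebesgue differentiation combined with the bounded distortion from (i), $|h(\Delta_n(x_0))|/|\Delta_n(x_0)|\to Dh(x_0)>0$, where $\Delta_n(x_0)\in P_n^{(1)}$ contains $x_0$. Using density of the orbit of $b_1$ in $S^1$ and again bounded distortion, I would transfer the non-convergence of (ii) from the break orbit to the nested elements $\Delta_n(x_0)$, forcing $|h(\Delta_n(x_0))|/|\Delta_n(x_0)|$ to oscillate, contradicting the existence of a finite positive $Dh(x_0)$. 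Hence $Dh=0$ a.e.\ and $h$ is singular. \emph{The main obstacle} is precisely this last transfer: the renormalization mismatch is most transparent on partition elements abutting the break orbit, which carry little Lebesgue mass, whereas the contradiction must be drawn at Lebesgue-typical points. Making this rigorous requires a quantitative form of the renormalization convergence near $b_i$ combined with a covering/pigeonhole argument using $C^{2+\alpha}$ regularity away from $b_i$, showing that the $\sigma_i$-dependent scaling in fact governs the geometry of $P_n^{(i)}$ throughout almost all of $S^1$, not only on elements adjacent to the break orbit.
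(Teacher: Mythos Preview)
The paper does not prove this theorem---it is quoted from \cite{DHT2010}---but the machinery the paper builds for its own main result (Theorem~\ref{ADM1}, the two--break--point case) follows the same template as \cite{DHT2010} and is the natural point of comparison.

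Your density-point strategy has a genuine gap precisely where you flag it. The $\sigma_i$-dependent renormalization asymptotics in (ii) are concentrated on partition elements adjacent to the break orbit; nothing in your outline forces the ratios $|h(\Delta_n(x_0))|/|\Delta_n(x_0)|$ to oscillate at a Lebesgue-typical $x_0$. Bounded distortion of $f_i^{q_n}$ controls how $|f_i^k(\Delta)|/|\Delta|$ varies with $k$, not how $|h(\Delta)|/|\Delta|$ varies across different atoms $\Delta$ of the same partition, so the proposed transfer via bounded distortion does not connect the break-orbit geometry to the behavior at $x_0$. The ``covering/pigeonhole'' patch you sketch at the end points in the right direction, but once set up carefully it is no longer a pointwise argument at a single $x_0$: it becomes a lower bound on the total Lebesgue measure of a union of intervals near the break-orbit points, which is exactly the paper's method. (Two minor points: $h(b_1)=b_2$ is a normalization one is free to impose, not something forced by ``dynamical rigidity''; and in (i), bounded distortion of $f_i^{q_n}$ is not what makes $|h(\Delta)|/|\Delta|$ close to $Dh(x)$---that follows simply from the definition of the derivative once $\Delta$ has bounded eccentricity about $x$.)

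The paper's route sidesteps the transfer problem entirely. By ergodicity of $f_1$ with respect to Lebesgue measure, $h$ is either absolutely continuous or singular (Lemma~\ref{Zaruriy}). If $h$ were absolutely continuous, an $L^1$ argument (Theorem~\ref{theor 3.1.}) would force
\[
\ell\bigl\{x:\bigl|\log Df_2^{q_n}(h(x))-\log Df_1^{q_n}(x)\bigr|\ge\delta\bigr\}\longrightarrow 0
\]
for every $\delta>0$. One then contradicts this directly: near each of the $q_n$ preimages $f_1^{-k}(b_1)$ the function $\log Df_2^{q_n}\!\circ h-\log Df_1^{q_n}$ jumps by $|\log\sigma_{f_2}(b_2)-\log\sigma_{f_1}(b_1)|$, so on one side of each such point it stays bounded away from $0$ on an interval of length comparable to the containing partition atom; summing over $k$ yields a set of Lebesgue measure bounded below by a universal positive constant. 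No pointwise density argument is needed.
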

Now consider two piecewise-smooth circle homeomorphisms  $f_{1}$ and  $f_{2}$  with $m$ $(m\geq2)$ break
points and the same irrational rotation number.
Denote $BP(f_{1})$ and $ BP(f_{2})$ the sets of  break points of $f_{1}$ and  $f_{2}$.
 \begin{defi}\label{defKO}
The homeomorphisms $f_{1},$ $f_{2}$ are said to be \textbf{break equivalent}
if there exists a bijection  $\psi_{0}$ such that
\begin{enumerate}
  \item [(1)] $\psi_{0}(BP(f_{1}))=BP(f_{2})$;
  \item [(2)] $\sigma_{f_{2}}(\psi_{0}(b))=\sigma_{f_{1}}(b),$ for all $b\in BP(f_{1}).$
\end{enumerate}
\end{defi}
The rigidity problem for the break equivalent $C^{2+\alpha}$-homeomorphisms and
with trivial total jumps (i.e. it is equal to 1) was studied by K. Cunha and D. Smania in \cite{KcDs2012}.
It was proved that any two such homeomorphisms with some combinatorial conditions
are $C^{1}$-conjugated.
The main idea of this work is to consider piecewise-smooth circle homeomorphisms as
 generalized interval exchange transformations.
The case of non break equivalent homeomorphisms with two break points was studied by
H. Akhadkulov, A. Dzhalilov and D. Mayer in \cite{ADM2012}.
 The main result of \cite{ADM2012} is the following theorem.
\begin{theo}\label{ADM }  Let  $f_{i}\in C^{2+\alpha}(S^{1}\backslash\{{a_{i},b_{i}}\}), i=1,2 $
be circle homeomorphisms with two break points $ a_{i},b_{i}.$ Assume that
\begin{enumerate}
  \item [(1)] their rotation numbers $\rho(f_{i}),$ $i=1,2$ are irrational and coincide
  i.e. $\rho(f_{1})=\rho(f_{2})=\rho, \, \rho\in\mathbb{R}^{1}\setminus\mathbb{Q}$;
  \item [(2)] there exists a bijection  $\psi$ such that $\psi(BP(f_{1}))=BP(f_{2})$;
  \item [(3)] $\sigma_{f_{1}}(a_{1})\sigma_{f_{1}}(b_{1})\neq\sigma_{f_{2}}(a_{2})\sigma_{f_{2}}(b_{2})$.
  \end{enumerate}
Then the map $h$ conjugating $f_1$ and $f_2$ is a singular function.
\end{theo}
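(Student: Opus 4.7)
The strategy is to argue by contradiction: assume that $Dh(x)>0$ on a set of positive Lebesgue measure, apply the chain rule along orbit segments of length $q_n$ (where $p_n/q_n$ are the convergents of $\rho$), and exploit the individual jump-ratio mismatch to drive a logarithmic quantity to infinity on that set.

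First I would set up the renormalization geometry. The bounded-type hypothesis $k_n \leq K$ yields, for each $f_i$, comparability of adjacent atoms of the $n$-th dynamical partition $\mathcal{P}_n^{(i)}$ uniformly in $n$. The $C^1$ smoothness together with $D\log Df_i\in L^p$, $p>1$, gives a Denjoy--Koksma-type distortion estimate via H\"older's inequality: if $x,y$ lie in a common atom of $\mathcal{P}_n^{(i)}$ and the orbit segment joining them avoids the break points, then
$$
\bigl|\log Df_i^{q_n}(x)-\log Df_i^{q_n}(y)\bigr| \leq C_0
$$
with $C_0$ independent of $n$ and of the atom. Break-point crossings contribute discrete jumps that are tracked separately.

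Next, the heart of the argument is the decomposition
$$
\log Df_i^{q_n}(x) \;=\; \Sigma_i(x,n) \;+\; m^a_n(x)\,\log\sigma_{f_i}(a_i) \;+\; m^b_n(x)\,\log\sigma_{f_i}(b_i),
$$
where $\Sigma_i(x,n)$ is uniformly bounded by the previous step and $m^a_n(x),m^b_n(x)$ are signed counts of passages of the orbit $\{x,f_ix,\ldots,f_i^{q_n-1}x\}$ through the break points. Since the partitions are conjugate under $h$, the combinatorial counts are the same for $f_1$ and $f_2$ on paired atoms. Writing the equal-total-jump hypothesis as $\log\sigma_{f_1}(a_1)+\log\sigma_{f_1}(b_1) = \log\sigma_{f_2}(a_2)+\log\sigma_{f_2}(b_2)$, with $\log\sigma_{f_1}(a_1)\neq\log\sigma_{f_2}(a_2)$, and setting $\delta := \log\sigma_{f_1}(a_1)-\log\sigma_{f_2}(a_2) = -(\log\sigma_{f_1}(b_1)-\log\sigma_{f_2}(b_2))\neq 0$, subtraction yields the key identity
$$
\log Df_1^{q_n}(x) - \log Df_2^{q_n}(h(x)) \;=\; \bigl(m^a_n(x)-m^b_n(x)\bigr)\,\delta \;+\; O(1).
$$
The chain rule applied to $h\circ f_1^{q_n}=f_2^{q_n}\circ h$ equates the left-hand side to $\log Dh(f_1^{q_n}x)-\log Dh(x)$ wherever both derivatives exist and are positive. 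If $x_0$ is a density point of the hypothetical positive-measure set $\{Dh>0\}$ and $f_1^{q_n}x_0$ returns to this set along a subsequence (guaranteed by Poincar\'e recurrence for $\mu_{f_1}$ and the bounded-type geometry comparing $\mu_{f_1}$ with Lebesgue measure on partition atoms), the right-hand side stays bounded; but we shall exhibit $x_0$ for which $|m^a_n(x_0)-m^b_n(x_0)|\to\infty$, producing the required contradiction.

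The main obstacle is the production, on a set of positive Lebesgue measure, of starting points $x$ whose orbit-counts $m^a_n(x)-m^b_n(x)$ diverge in absolute value along a subsequence. Under bounded-type combinatorics the passage counts of a $q_n$-orbit through a small neighborhood of a break point can be expressed via classical Ostrowski-type sums of the Ostrowski coordinates of that break point with respect to $\rho$; these sums are balanced between $a_i$ and $b_i$ only on the measure-zero set where the two Ostrowski expansions agree, and generically the discrepancy $m^a_n-m^b_n$ is of the order of a random walk and thus diverges along subsequences on a set of full measure. Combining this combinatorial divergence with the $L^p$ hypothesis --- essential to absorb $\Sigma_i(x,n)$ into the $O(1)$ error when two break points interact and plain bounded-variation controls are insufficient --- yields the contradiction and hence $Dh=0$ almost everywhere.
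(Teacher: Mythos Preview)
Your proposal has two fundamental problems.

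First, you have misread the hypotheses. Condition (3) of the stated theorem is $\sigma_{f_1}(a_1)\sigma_{f_1}(b_1)\neq\sigma_{f_2}(a_2)\sigma_{f_2}(b_2)$, i.e.\ the total jumps are \emph{different}. Your argument instead invokes ``the equal-total-jump hypothesis'' $\log\sigma_{f_1}(a_1)+\log\sigma_{f_1}(b_1)=\log\sigma_{f_2}(a_2)+\log\sigma_{f_2}(b_2)$ together with an individual mismatch $\log\sigma_{f_1}(a_1)\neq\log\sigma_{f_2}(a_2)$; those are the hypotheses of Theorem~\ref{ADM1}, not of the theorem you were asked to prove. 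You also import the bounded-type assumption and the (K.O.) condition $D\log Df_i\in L^p$, neither of which is present here (the statement is for $C^{2+\alpha}$ maps with an arbitrary irrational $\rho$).

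Second, and more seriously, the divergence mechanism you propose cannot work for \emph{any} of these theorems. The generalized Denjoy inequality (item (b) in Section~2) gives $|\log Df_i^{q_n}(x)|\leq v_i$ uniformly in $n$ at every continuity point, and hence
\[
\bigl|\log Df_1^{q_n}(x)-\log Df_2^{q_n}(h(x))\bigr|\;\leq\; v_1+v_2
\]
for all $n$ and almost every $x$. The quantity you claim to drive to infinity is thus uniformly bounded. Your decomposition with ``signed passage counts'' $m^a_n,m^b_n$ and $|m^a_n-m^b_n|\to\infty$ is incompatible with this bound: either the decomposition is not well defined, or the term $\Sigma_i(x,n)$ is not $O(1)$ and absorbs the alleged divergence. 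There is in fact no way to isolate divergent integer counts from the Birkhoff sum $\sum_{j<q_n}\log Df_i(f_i^j x)$; the combinatorics of the $q_n$-orbit forces the continuous and jump contributions to cancel to within $v_i$.

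For comparison, the present paper does not itself prove this theorem (it is quoted from \cite{ADM2012}), but its proof of the companion Theorem~\ref{ADM1} shows the correct shape of the argument. One first proves (Theorem~\ref{theor 3.1.}) that absolute continuity of $h$ forces $\log Df_2^{q_n}(h(x))-\log Df_1^{q_n}(x)\to 0$ in Lebesgue measure. One then locates, near the preimages $f_1^{-k}(a_1)$ of a break point, subintervals of uniformly positive total length on which the jump of $F_n=Df_2^{q_n}\circ h\,/\,Df_1^{q_n}$ forces $|\log F_n|\geq\delta_0>0$ for a subsequence of $n$. The contradiction is therefore ``does not tend to zero on a set of definite measure,'' not ``tends to infinity''; the Ostrowski/random-walk picture you sketch plays no role.
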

Now we consider a wider class of circle
homeomorphisms with break points.
We say that a circle homeomorphism  $f$ with finite number break points
satisfies  generalized conditions of Katznelson-Ornstein (K.O),
if  $\log Df$ is absolutely continuous on each continuity intervals
of $Df$ and $D\log Df\in L^{p}$ for some $p>1.$
  In this work we study the conjugating map $h$
between two circle homeomorphisms $f_{1}$ and $f_{2}$ with two
break points and satisfying (K.O) conditions.
Now we formulate the main result of present paper.
\begin{theo}\label{ADM1}
Let $f_{i},$  $i=1,2$ be piecewise-smooth  $C^{1}$  circle homeomorphisms with two
break points $a_{i}, b_{i}$.  Assume that
\begin{itemize}
\item[(1)] the rotation numbers $\rho(f_{i})$ of $f_{i}, \ i=1,2$ are irrational of bounded
 type  and coincide;
\item[(2)]$\sigma_{f_{1}}(a_{1})\sigma_{f_{1}}(b_{1})=\sigma_{f_{2}}(a_{2})\sigma_{f_{2}}(b_{2})$;
  \item[(3)] $\sigma_{f_{1}}(a_{1})\neq\sigma_{f_{2}}(b)$ for all $b\in BP(f_{2});$
\item[(4)] the break points of $f_{i},$  $i=1,2$ do not lie on the same orbit;
\item[(5)] $f_{i},$  $i=1,2$ satisfy (K.O) conditions for the same $p>1$.
\end{itemize}
Then the  map $h$ conjugating $f_{1}$ and $f_{2}$ is a singular function.
\end{theo}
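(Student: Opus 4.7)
The plan is to prove $Dh(x)=0$ for Lebesgue-almost every $x\in S^1$, following the scheme developed in \cite{DHT2010,ADM2012} and adapting it to the Katznelson--Ornstein regularity class. Because conditions (2)--(3) ensure equality of total jumps but mismatch of individual jump profiles, the argument must go beyond the growth-rate comparison of Theorem \ref{ADM } and extract a more refined asymmetry between the multiplicative cocycles $Df_1^{q_n}$ and $Df_2^{q_n}$ over orbit segments of length $q_n$, where $p_n/q_n$ denote the convergents of $\rho$.

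As a preliminary I would introduce the dynamical partitions $\mathcal{P}_n(f_i)$ of $S^1$ generated by the points $f_i^{j}(a_i)$ for $0\le j< q_n+q_{n+1}$. Since $\rho$ is of bounded type, atoms of $\mathcal{P}_n(f_i)$ have uniformly comparable lengths and uniformly comparable $\mu_i$-measures, where $\mu_i$ is the unique $f_i$-invariant probability measure. The main technical ingredient is a generalized Denjoy--Koksma distortion bound valid in the (K.O.) class: for every atom $\Delta\in\mathcal{P}_n(f_i)$ and every $x,y\in\Delta$,
$$
\bigl|\log Df_i^{q_n}(x)-\log Df_i^{q_n}(y)\bigr|\le C,
$$
with $C$ independent of $n,\Delta,x,y$. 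I would prove it by telescoping $\log Df_i^{q_n}$ along the orbit and combining the bounded-type control of atom lengths with H\"older's inequality applied to $D\log Df_i\in L^{p}$.

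The central step is a combinatorial analysis of
$$
\log Df_i^{q_n}(x)\;=\;\sum_{j=0}^{q_n-1}\log Df_i\bigl(f_i^{j}(x)\bigr).
$$
Off the break points $\log Df_i$ is absolutely continuous, so up to a bounded error this sum is determined by the \emph{jump profile} of the orbit: the multiset of values $\log\sigma_{f_i}(b)$ collected each time the orbit crosses a break point $b\in BP(f_i)$. Condition (2) makes the aggregate profiles coincide for $i=1,2$, while (3) and (4) prevent any pointwise matching via $h$. I would use these to locate, for arbitrarily large $n$, atoms $\Delta_n\in\mathcal{P}_n(f_1)$ whose $f_1$-orbits pass through a small neighborhood of $a_1$ while, for $x\in\Delta_n$, the $f_2$-orbit of $h(x)$ visits only break points with jump ratios distinct from $\sigma_{f_1}(a_1)$ (indeed by (3) no break point of $f_2$ has jump ratio $\sigma_{f_1}(a_1)$). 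A renewal-type count of visits to neighborhoods of the break points up to time $q_n$ then yields
$$
\bigl|\log Df_1^{q_n}(x)-\log Df_2^{q_n}(h(x))\bigr|\longrightarrow\infty
$$
along a subsequence and for $x$ in a set of positive Lebesgue measure.

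To conclude I argue by contradiction: suppose $h$ is not singular, so that $\{x:Dh(x)>0\}$ has positive Lebesgue measure. At any density point $x_0$ of this set the conjugacy equation $h\circ f_1^{q_n}=f_2^{q_n}\circ h$ together with the distortion estimate of the second paragraph yields, in the almost-sure sense provided by Lebesgue differentiation on $\mathcal{P}_n(f_1)$,
$$
\frac{Df_2^{q_n}(h(x_0))}{Df_1^{q_n}(x_0)}\;=\;\frac{Dh(f_1^{q_n}(x_0))}{Dh(x_0)},
$$
and the right-hand side stays in a bounded multiplicative window along a suitable subsequence (using Lusin's theorem to replace $\{Dh>0\}$ by a compact set on which $Dh$ is continuous, and minimality of the rotation to make $f_1^{q_n}(x_0)$ return to it). The divergence of the left-hand side extracted in the previous paragraph is then incompatible, forcing $Dh=0$ a.e. The main obstacle is the combinatorial step: extracting a \emph{quantitative} mismatch in jump accumulation that survives the aggregate cancellation imposed by condition (2) requires careful tracking of the relative positions of the $f_1$- and $f_2$-orbits of the break points under the common rotation, and it is condition (4), together with (3), that permits the mismatch at $a_1$ to propagate into an unbounded logarithmic discrepancy.
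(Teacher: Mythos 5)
Your opening architecture (dynamical partitions adapted to bounded type, a distortion estimate obtained via H\"older's inequality for $D\log Df_i\in L^p$, and a contradiction argument against positive $Dh$) is in the spirit of the paper, and the distortion bound you sketch is essentially the paper's Lemma \ref{universal estimates}. However, the central quantitative claim in your third paragraph is false and the proof cannot be repaired as written. You assert that a renewal count of visits to neighborhoods of the break points produces
$$
\bigl|\log Df_1^{q_n}(x)-\log Df_2^{q_n}(h(x))\bigr|\longrightarrow\infty
$$
on a set of positive measure. But the generalized Denjoy inequality, valid for each $f_i$ in the class under consideration, gives $e^{-v_i}\le Df_i^{q_n}\le e^{v_i}$ at continuity points of $Df_i^{q_n}$, so $|\log Df_1^{q_n}(x)-\log Df_2^{q_n}(h(x))|\le v_1+v_2$ uniformly in $n$ and $x$; it cannot diverge. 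Moreover, condition (2) (equal total jumps) forces the aggregate jump contribution over a full return time $q_n$ to cancel between $f_1$ and $f_2$, which is precisely why any renewal-type accumulation argument collapses: the two cocycles agree to within a bounded window, never by an unbounded amount.

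What the paper actually proves is much weaker and is exactly as strong as needed: there is a fixed $\delta>0$ (depending on the mismatch $|\log\sigma_{f_2}(a_2)-\log\sigma_{f_1}(a_1)|$) and a subsequence $n_s$ such that the set $\{x: |\log Df_2^{q_{n_s}}(h(x))-\log Df_1^{q_{n_s}}(x)|\ge\delta\}$ has Lebesgue measure bounded below by a positive constant independent of $s$. This is obtained not by counting visits but by a local analysis in a single generator interval near one break point: Theorem \ref{proposition 4.5} shows (via the barycentric-coefficient argument, which requires condition (4)) that break points of $f_1^{q_m}$ are uniformly separated along a subsequence, and Propositions \ref{proposition 4.6}--\ref{proposition 4.7} then show that the jump of the cocycle ratio $F_{n_s}$ at $a^1_k$ (nontrivial by condition (3)) forces $|\log F_{n_s}|\ge\delta$ on one comparable-length side of each $a^1_k$. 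The contradiction is then extracted not from a pointwise/Lusin argument at a density point, but from the $L^1$ necessary condition of Theorem \ref{theor 3.1.}: if $h$ is absolutely continuous, the function $\psi=\exp(ci\log Dh)$ satisfies $\|\psi\circ f_1^{q_n}-\psi\|_1\to 0$, which forces $\ell(\{|\log F_n|\ge\delta\})\to 0$ for every $\delta>0$; the fixed lower bound contradicts this. (The dichotomy absolutely continuous/singular is a separate ergodicity argument, Lemma \ref{Zaruriy}.) Your Lusin/density-point route is not obviously salvageable even with the corrected bounded-below statement, because the ``good'' set where the $\delta$-separation holds is a union of short pieces of atoms that moves with $n_s$, so there is no guarantee that a fixed density point $x_0$ of $\{Dh>0\}$ lies in it along the whole subsequence; the measure-theoretic ($L^1$) formulation of the paper avoids this issue entirely.
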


The main approach for proving theorem \ref{ADM1} plays to study the behaviours
of sequence $\Big\{\log \frac{Df_{2}^{q_{n}}(h(x))}{Df^{q_{n}}_{1}(x)}\Big\}_{n}^{\infty}$
where $q_{n},$ $n=1,2,...$ are first return times. This argument has been used
by M. Herman in \cite{He1979} for investigating conjugations between piecewise linear circle
homeomorphisms with two break points.  Recently it has been discussed by
A. Dzhalilov and I. Liousse \cite{DL2006}, to study invariant measures of circle homeomorphisms with two break points.
\section{The Denjoy theory}
 We use the continued fraction
$\rho=[k_{1}, k_{2},...,k_{n},...)$ of the irrational number which is understood as the limit of the sequence of convergents
 $p_{n}/q_{n}=[k_{1}, k_{2},...,k_{n}].$ The sequence of positive integer $k_{n}$ with $n\geq 1,$ which are called
 \emph{incomplete multiples}, is uniquely determined for irrational $\rho.$ The coprimes $p_{n}$ and $q_{n}$ satisfy the recurrence relations
 $p_{n}=k_{n}p_{n-1}+p_{n-2}$ and $q_{n}=k_{n}q_{n-1}+q_{n-2}$ for $n\geq 1,$ where we set for convenience,  $p_{-1}=0,$ $q_{-1}=1,$ and $p_{0}=1,$ $q_{0}=k_{1}.$

 The class of \textbf{P-homeomorphisms}
 consists of orientation preserving circle homeomorphisms
$f$ differentiable except in finite number break points
 at which left and right derivatives, denoted
respectively by $Df_{-}$ and $Df_{+}$, exist, and such that
\begin{itemize}
\item[-] there exist constants $0<c_{1}<c_{2}<\infty$ with
$c_{1}<Df(x)<c_{2}$ for all $x\in S^{1}\backslash{BP(f)}$,
$c_{1}<Df_{-}(x_{b})<c_{2}$ and $c_{1}<Df_{+}(x_{b})<c_{2}$ for
all $x_{b}\in {BP(f)}$, with $BP(f)$ the set
 of break points of $f$ in $S^{1}$;
 \item[-] $\log Df$ has bounded variation in $S^{1}$.
 \end{itemize}
If $\log Df$ has bounded variation, in this situation
 $\log Df^{-1}$ also have the same total variation  and denote by $v=Var_{S^{1}} \log Df.$
Let $\xi\in S^{1},$ we define the $n$-th \emph{generator interval} $\Delta^{n}_{0}(\xi)$ as the circle arc $[\xi, f^{q_{n}}(\xi)]$ for
even $n$ and as  $[f^{q_{n}}(\xi), \xi]$ for odd $n.$
The assertions listed below, which are valid for any P-homeomorphism $f$
 with irrational rotation number $\rho=\rho(f).$
 Their proofs can be found in \cite{ADM2012}, \cite{DL2006} and \cite{He1979}.

\begin{itemize}
   \item [(a)] Generalized Finzi inequality; suppose $\xi\in S^{1},$ $\eta, \zeta \in \Delta^{n-1}_{0}(\xi)$ and $\eta,$ $\zeta$ are continuity points of $Df^{k},$ $0\leq k < q_{n}.$ Then the following inequality holds:
$|\log Df^{k}(\eta)-\log Df^{k}(\zeta)|\leq v.$
   \item [(b)] Generalized Denjoy inequalities; let $\xi_{0}\in S^{1}$ be a continuity point of $Df^{q_{n}},$ then the following inequality holds: $e^{-v}\leq Df^{q_{n}}(\xi_0)\leq e^{v}.$
       \end{itemize}
From generalized Denjoy inequalities it follows that the trajectory of every
point $\xi\in S^1$ is the dense set in $S^{1}$. This together with
monotonicity of the homeomorphism $f$ implies the following theorem.
\begin{itemize}
  \item [(c)]Generalized Denjoy theorem; let  $f$  be a P-homeomorphism with irrational rotation
number $\rho.$  Then $f$ is conjugate to the linear rotation $f_{\rho}$.
\end{itemize}
\begin{rem}\label{remark1}
The same assertions as $(a)-(c)$  holds for $f^{-1}.$
\end{rem}

\section {Absolute continuity of conjugating map}
Consider two P-homeomorphisms on two copies of the circle $S^{1}$ with identical irrational rotation number $\rho$.
Let $\varphi_{1}$ and $\varphi_{2}$ be maps conjugating $f_{1}$ and $f_{2}$ with the pure rotation $f_{\rho}$, i.e. $\varphi_{1}\circ
f_{1}=f_{\rho}\circ\varphi_{1}$ and $\varphi_{2}\circ
f_{2}=f_{\rho}\circ \varphi_{2}$. It is easy to check that the map
$h=\varphi^{-1}_{2}\circ \varphi_{1}$ conjugates $f_{1}$ and
$f_{2}$ , i.e.
\begin{eqnarray}\label{eq21}
h(f_{1}(x))=f_{2}(h(x))
\end{eqnarray}
for all $x\in S^{1}$.

\begin{lemm}\label{Zaruriy}
Let $f_{1}$ and $f_{2}$ are P-homeomorphisms with identical irrational rotation number. Then conjugating map $h$ between  $f_{1}$ and $f_{2}$ is either absolutely continuous or singular function.
\end{lemm}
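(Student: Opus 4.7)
The plan is to apply an ergodicity argument symmetrically to $h$ and to $h^{-1}$, reducing the dichotomy to one on the Lebesgue measure of the sets where the respective derivatives are positive.

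First, since $h$ is a monotone continuous circle homeomorphism, its derivative $Dh$ exists $m$-almost everywhere (where $m$ denotes Lebesgue measure) and $\int_{S^1} Dh\,dm \le 1$. At $m$-almost every $x$ (outside the countable, hence $m$-null, union of break-point orbits of $f_1$ and $f_2$) all three of $f_1, f_2, h$ are differentiable, so differentiating $h\circ f_1=f_2\circ h$ yields the cocycle
$$Dh(f_1(x))\,Df_1(x) \;=\; Df_2(h(x))\,Dh(x).$$
Because $Df_1$ and $Df_2$ lie in a bounded interval $[c_1,c_2]$ with $c_1>0$, this forces $G:=\{x\in S^1:Dh(x)>0\}$ to be $f_1$-invariant modulo $m$-null sets. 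The same argument applied to $h^{-1}$, which conjugates $f_2$ to $f_1$ and is itself a monotone continuous circle homeomorphism (cf.~Remark~\ref{remark1}), shows that $G':=\{y\in S^1:Dh^{-1}(y)>0\}$ is $f_2$-invariant modulo $m$-null sets.

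The main step, and the principal obstacle, is to show that any $f_i$-invariant-mod-null set has Lebesgue measure in $\{0,1\}$. Since each $f_i$ is minimal by the generalized Denjoy theorem (assertion (c)) and the iterates $f_i^{q_n}$ have uniformly bounded distortion on generator intervals (by assertions (a)--(b)), a standard Lebesgue density-point argument applies: if $0<m(G)<1$, pick density points $x_0\in G$ and $y_0\in G^c$, use minimality to find $n$ with $f_1^{q_n}(x_0)$ very close to $y_0$, and exploit the bounded distortion of $f_1^{q_n}$ on a sufficiently small neighborhood of $x_0$ to transport the near-full density of $G$ near $x_0$ to a near-full density of $G$ near $y_0$, contradicting $y_0$ being a density point of $G^c$. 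Hence $m(G),m(G')\in\{0,1\}$.

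Finally, I couple $G$ and $G'$ through the measure-theoretic inverse function theorem: at $m$-a.e.\ $x\in G$, $h^{-1}$ is differentiable at $h(x)$ with $Dh^{-1}(h(x))=1/Dh(x)>0$, giving $h(G)\subseteq G'$ mod $m$-null, and the reverse inclusion is symmetric, so $h(G)=G'$ mod $m$-null. Since $Dh$ is $m$-a.e.\ finite on $G$, the restriction $h|_G$ has the Luzin $(N)$ property, yielding the change of variables
$$m(G') \;=\; m(h(G)) \;=\; \int_G Dh\,dm \;=\; \int_{S^1} Dh\,dm \;=:\; \alpha,$$
where $\alpha$ is the mass of the absolutely continuous part of the Stieltjes measure $dh$ with respect to $m$. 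Since $m(G')\in\{0,1\}$, we conclude $\alpha\in\{0,1\}$: if $\alpha=0$ then $Dh=0$ $m$-a.e., so $h$ is singular; if $\alpha=1$ the singular part of $dh$ vanishes and $h$ is absolutely continuous.
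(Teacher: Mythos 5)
Your argument follows the same basic strategy as the paper (ergodicity of the $f_1$-invariant set $\{Dh>0\}$), but is genuinely more careful, and in fact closes a gap that the paper's proof leaves open. The paper defines $\mathcal{A}=\{Dh>0\}$, deduces $\ell(\mathcal{A})\in\{0,1\}$ from ergodicity of $f_1$, and then simply asserts that $\ell(\mathcal{A})=1$ implies $h$ is absolutely continuous. That last implication is \emph{not} automatic for monotone functions: for instance $x\mapsto\bigl(x+c(x)\bigr)/2$, with $c$ the Cantor function, has derivative $1/2$ almost everywhere but is not absolutely continuous, and its absolutely continuous part has mass $1/2$. So $\ell(\mathcal{A})=1$ by itself does not suffice.

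Your proof repairs this by running the ergodicity argument symmetrically for $h^{-1}$ (using ergodicity of $f_2$), obtaining $m(G')\in\{0,1\}$ for $G'=\{Dh^{-1}>0\}$, and then coupling $G$ and $G'$ through the de la Vall\'ee Poussin/Banach change-of-variables identity $m(G')=m(h(G))=\int_G Dh\,dm$, which equals the mass $\alpha$ of the absolutely continuous part of $dh$. Since both $m(G)$ and $m(G')$ lie in $\{0,1\}$ and $m(G)=0$ forces $\alpha=0$ while $m(G)=1$ forces $\alpha=m(G')>0$, one indeed gets $\alpha\in\{0,1\}$, which is exactly the singular/absolutely-continuous dichotomy. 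The intermediate steps you invoke --- that $G$ and $G'$ are invariant mod null via the a.e.\ chain rule applied to the conjugacy, that P-homeomorphisms are ergodic via bounded distortion and a density-point argument, and that $h|_G$ has Luzin $(N)$ because $Dh$ is finite there --- are all standard and correctly applied. In short: same skeleton, but you have supplied the missing final step that the paper takes for granted.
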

\begin{proof}
 The conjugating homeomorphism  $h$ is strictly increasing function on $S^{1}.$ Then $Dh$ exists almost everywhere on $S^{1}.$ Denote by $\mathcal{A}=\{x: \,\,x\in S^{1},\,\, Dh(x)>0\}.$ It is clear that the set $\mathcal{A}$ is mod 0 invariant set with respect to  $f_{1}$  i.e. $\mathcal{A}=f^{-1}_{1}(\mathcal{A})$ almost everywhere on $\mathcal{A}$.
 As P-homeomorphism the map $f_{1}$ is  ergodic with respect to Lebesgue measure.
  Hence the Lebesque measure of the set $\mathcal{A}$ is 0 or 1.  The conjugation $h$ is singular function if  $\ell(\mathcal{A})=0$ and  it is absolutely continuous if $\ell(\mathcal{A})=1.$
\end{proof}
The following theorem gives the necessary condition of absolute continuity of conjugation.
\begin{theo}\label{theor 3.1.}
 Let $f_{i}, i=1,2$ are P- homeomorphisms with identical irrational rotation number $\rho$. If conjugation map $h$ between
$f_{1}$ and $f_{2}$ is absolutely continuous function, then for all $\delta>0$
$$
\underset{n \rightarrow \infty}{\lim}\ell (x: |\log Df^{q_n}_{2}(h(x))- \log Df^{q_n}_{1}(x)|\geq \delta)=0.
$$
\end{theo}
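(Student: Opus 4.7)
The plan is to reduce the statement to a continuity-in-measure fact about $\log Dh$. Since $h\circ f_1 = f_2\circ h$ everywhere, iterating gives $h\circ f_1^{q_n}=f_2^{q_n}\circ h$. Because $h$ is absolutely continuous and $f_1,f_2$ are P-homeomorphisms, the chain rule applies almost everywhere, yielding
$$
Dh(f_1^{q_n}(x))\,Df_1^{q_n}(x) \;=\; Df_2^{q_n}(h(x))\,Dh(x)
$$
for Lebesgue-a.e.\ $x$. Taking logarithms (this is the step which requires $Dh>0$ a.e.; this holds because $\{Dh>0\}$ is $f_1$-invariant mod $0$, $f_1$ is ergodic, and $h$ being absolutely continuous rules out the possibility $\ell(\{Dh>0\})=0$, as in Lemma \ref{Zaruriy}), we obtain
$$
\log Df_2^{q_n}(h(x))-\log Df_1^{q_n}(x)\;=\;g(f_1^{q_n}(x))-g(x)\quad\text{a.e.,}\qquad g:=\log Dh.
$$
Here $g$ is a Lebesgue-measurable function which is finite a.e. Hence the theorem reduces to showing that $g\circ f_1^{q_n}\to g$ in Lebesgue measure.

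Two ingredients will drive the argument. First, the generalized Denjoy theorem (c) gives a continuous conjugacy $\varphi_1$ of $f_1$ to $f_\rho$, so $f_1^{q_n}(x)=\varphi_1^{-1}(\varphi_1(x)+q_n\rho)$, and since $q_n\rho\to 0\pmod 1$, the iterates $T_n:=f_1^{q_n}$ converge to the identity \emph{uniformly} on $S^1$. Second, the generalized Denjoy inequality (b) gives $e^{-v}\le Df_1^{q_n}\le e^{v}$ at every continuity point, so a change of variables yields the distortion bound
$$
\ell(T_n^{-1}(E))\;=\;\int_{E}\frac{dy}{Df_1^{q_n}(T_n^{-1}y)}\;\le\; e^{v}\,\ell(E)
$$
for every Borel set $E\subset S^1$ and every $n$.

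Now I would fix $\delta>0$ and $\varepsilon>0$, truncate $g$ to $g_M:=\max(-M,\min(M,g))$ with $M$ so large that $\ell(\{g\ne g_M\})<\varepsilon$, and apply Lusin's theorem to pick a continuous $\tilde g:S^1\to\mathbb R$ with $\|\tilde g\|_\infty\le M$ and $\ell(\{\tilde g\ne g_M\})<\varepsilon$. Writing
$$
g\circ T_n-g\;=\;(g-\tilde g)\circ T_n+(\tilde g\circ T_n-\tilde g)+(\tilde g-g),
$$
the middle term tends to $0$ uniformly because $\tilde g$ is uniformly continuous and $T_n\to\mathrm{id}$ uniformly. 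The last term exceeds $\delta/3$ only on the set $\{|g-\tilde g|\ge\delta/3\}$, whose Lebesgue measure is at most $2\varepsilon$ by construction. For the first term, the change-of-variable inequality above gives
$$
\ell\bigl\{x:|g(T_nx)-\tilde g(T_nx)|\ge\delta/3\bigr\}\;\le\; e^{v}\,\ell\bigl\{y:|g(y)-\tilde g(y)|\ge\delta/3\bigr\}\;\le\;2e^{v}\varepsilon.
$$
Summing the three contributions and letting $\varepsilon\to 0$ after $n\to\infty$ gives the desired convergence in measure.

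The only subtlety, and thus the main obstacle, is dealing with the possible unboundedness of $g=\log Dh$; once it is controlled by the truncation step, the distortion bound from Denjoy (b) is exactly what is needed to move the exceptional set past the homeomorphism $T_n$ without losing control of its Lebesgue measure. Note that we do \emph{not} need $h^{-1}$ to be absolutely continuous, nor any integrability of $\log Dh$, because convergence in measure is preserved under the composition with maps of uniformly bounded distortion.
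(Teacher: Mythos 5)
Your proof is correct. You establish the same cocycle identity $\log Df_2^{q_n}(h(x))-\log Df_1^{q_n}(x)=g(f_1^{q_n}(x))-g(x)$ with $g=\log Dh$, and you rest on the same two analytic pillars as the paper: uniform convergence $f_1^{q_n}\to\mathrm{id}$ on $S^1$ (from the topological conjugacy to the rotation) and the bounded-distortion estimate $\ell\bigl(T_n^{-1}(E)\bigr)\le e^{v}\ell(E)$ coming from the generalized Denjoy inequality. Where you genuinely diverge is in how the possible unboundedness of $g$ is tamed. The paper sidesteps it by wrapping $g$ into the bounded complex exponential $\psi=\exp\bigl(2\pi i\,g/(2(v_1+v_2))\bigr)$, proving the $L^1$-convergence $\|\psi\circ f_1^{q_n}-\psi\|_1\to 0$ for arbitrary $\psi\in L^1$, and then translating the $L^1$ statement back into the measure statement via the identity $|e^{i\alpha}-e^{i\beta}|=2|\sin\tfrac{\alpha-\beta}{2}|$ together with the a priori bound $|\alpha-\beta|\le v_1+v_2$. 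You instead keep $g$ real-valued, truncate it to $g_M$, invoke Lusin's theorem to produce a continuous $\tilde g$ agreeing with $g_M$ off a small set, and then prove convergence in measure directly by the three-term splitting; the middle term dies uniformly, and the two error terms are controlled by the truncation and by pushing an exceptional set through $T_n^{-1}$ using the same Denjoy distortion bound. Your route avoids both the complex exponential and the $L^1$ detour, so it is somewhat more elementary and transparent; the paper's route is slightly slicker in that it reduces the statement in one stroke to a standard-looking $L^1$ ergodic fact and needs no Lusin/truncation step. Both are fully correct, and both ultimately exploit exactly the same structural facts about P-homeomorphisms, so the difference is one of technical packaging rather than of underlying idea.
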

\begin{proof}
First we prove that the sequence $f^{q_n}_{1}(x)$ uniformly converges to $x$.
It is clear that  $|f^{q_n}_{1}(x)-x|=|\varphi_{1}^{-1}\circ f_{\rho}^{q_n}\circ\varphi_{1}(x)-x|.$
By setting $y=\varphi_{1}(x)$ we get
$|f^{q_n}_{1}(x)-x|=|\varphi_{1}^{-1}(f_{\rho}^{q_n}(y))-\varphi_{1}^{-1}(y)|.$
Furthermore $|f_{\rho}^{q_n}(x)-x|\leq 1/q_{n}$
does not depend on $x$ and tends to 0. This and the uniform continuity of $\varphi_{1}^{-1}$ on $S^{1}$
implies that the sequence $f^{q_n}_{1}(x)$  uniformly converges to $x$.
Denote by $\Arrowvert\cdot\Arrowvert_{1}$ the norm in
 $L^{1}(S^{1}, d\ell).$
Now we show that
\begin{eqnarray}\label{eq21a}
\underset{n\rightarrow\infty}{\lim} \|\psi\circ f_{1}^{q_n}-\psi\|_{1}=0,\,\,\,
i=1,2\,\,\, \text{for all}\,\,\, \psi \in L^{1}(S^{1}, d\ell).
\end{eqnarray}
Well known fact that the class $C([a, b])$ of continuous functions on $[a, b]$ is dense (in $\|\cdot\|_{1}$) in $L^{1}([a, b], d\ell).$
 From this fact implies that if $\psi\in L^{1}(S^{1},d\ell)$, then for any sufficiently small $\epsilon>0$
there exists a continuous function $\psi_{\epsilon}\in C(S^{1})$ and $\phi_{\epsilon}\in L^{1}(S^{1}, d\ell)$
such that $\psi=\psi_{\epsilon}+\phi_{\epsilon}$ and $\|\phi_{\epsilon}\|_{1}\leq\epsilon.$ Using this and
 Denjoy inequalities we obtain
$$
\|\psi\circ f_{1}^{q_n}-\psi\|_{1}\leq \|\psi_{\epsilon}\circ f_{1}^{q_n}-\psi_{\epsilon}\|_{1}+
(\sup |D f_{1}^{q_n}|^{-1}+1)\|\phi_{\epsilon}\|_{1}\leq
$$
\begin{flushright}
$
\leq\|\psi_{\epsilon}\circ f_{1}^{q_n}-\psi_{\epsilon}\|_{L_{1}}+(1+e^{v})\|\phi_{\epsilon}\|_{1}.
$
\end{flushright}
As $\psi_{\epsilon}$ is uniformly continuous on $S^{1}$ and by exponential refinement $f_{1}^{q_n}(x)$ uniformly tends to $x$,
there exists a positive integer $n_{0}=n_{0}(\epsilon)$ such that for all
$n\geq n_{0},$ the $\|\psi_{\epsilon}\circ f_{1}^{q_n}-\psi_{\epsilon}\|_{1}\leq\epsilon .$
Therefore, $\|\psi\circ f_{1}^{q_n}-\psi\|_{1}\leq(2+e^{v})\epsilon.$
Since $\epsilon>0$ was arbitrary and sufficiently small.

Now we prove theorem \ref{theor 3.1.}.
 Assume that conjugation map $h$ is absolutely continuous function then $D h \in L^{1}(S^{1}, d\ell)$ and $Dh(x)>0,$  $x\in\mathcal{A}.$
  Using equation (\ref{eq21}) it is easy to see that for all natural number $n,$ the function $D h$ satisfies the following
  equation
$$
D h(f_{1}^{q_n}(x))D f_{1}^{q_n}(x)=Df_{2}^{q_n}(h(x)) D h(x)  \,\,\,\, \text{a.e.}
$$
Taking the logarithm, we obtain
$$
\log D h(f_{1}^{q_n}(x))-\log D h(x)=\log Df_{2}^{q_n}(h(x))- \log D f_{1}^{q_n}(x)\,\,\,\,\, \text{a.e.}
$$
multiplying by $2i\pi/2(v_{1}+v_{2}),$ where $v_{j}=Var_{S^{1}}\log Df_{j},$ $j=1,2$ we obtain
$$
\frac{2i\pi\log D h(f_{1}^{q_n}(x))}{2(v_{1}+v_{2})}-\frac{2i\pi\log D h(x)}{2(v_{1}+v_{2})}=
\frac{2i\pi\log Df_{2}^{q_n}(h(x))}{2(v_{1}+v_{2})}-\frac{2i\pi\log D f_{1}^{q_n}(x)}{2(v_{1}+v_{2})}\,\,\,\,\, \text{a.e.}
$$
Consider the following function
$$
\psi(x)=\left\{
  \begin{array}{ll}
    \exp(\frac{2i\pi\log D h(x)}{2(v_{1}+v_{2})}) & \hbox{if \,\,\,\,\,\,\,$x\in \mathcal{A},$}  \\
    0 & \hbox{if \,\,\,\,\,\, $x\in S^{1}\setminus \mathcal{A}$.}
  \end{array}
\right.
$$
It is clear that $\psi$ is measurable, $\psi\in L^{1}(S^{1}, d\ell)$ and $\|\psi\|_{1}=\ell(\mathcal{A}).$
Taking the exponential from last equation we obtain
$$
\psi(f_{1}^{q_{n}}(x))-\psi(x)=\Big[\exp\Big\{2i\pi\Big(\frac{\log Df_{2}^{q_n}(h(x))- \log D f_{1}^{q_n}(x)}{2(v_{1}+v_{2})}\Big)\Big\}-1\Big]\psi(x) \,\,\,\,\,\,\, \text{a.e.}
$$
Integrating the module of this equality, we have
$$
\underset{S^{1}}{\int}|\psi(f_{1}^{q_{n}}(x))-\psi(x)|dx=\underset{S^{1}}{\int}\Big|\exp\Big\{2i\pi\Big(\frac{\log Df_{2}^{q_n}(h(x))- \log D f_{1}^{q_n}(x)}{2(v_{1}+v_{2})}\Big)\Big\}-1\Big|dx=
$$
$$
=\underset{S^{1}}{\int}2\sin\frac{\pi\Big|\log Df_{2}^{q_n}(h(x))- \log D f_{1}^{q_n}(x)\Big|}{2(v_{1}+v_{2})}dx
$$
Suppose, by contradiction, that there exists $\delta>0$ (we may suppose that $\delta\in (0, v_{1}+v_{2}]$) such that
$\ell(S_{\delta}^{n})$ does not converge to 0 when $n$ goes to infinity, where $S_{\delta}^{n}=\{x: |\log Df_{2}^{q_n}(h(x))- \log D f_{1}^{q_n}(x)|\geq\delta\}.$
It follows from Denjoy's inequality  $|\log Df_{2}^{q_n}(h(x))- \log D f_{1}^{q_n}(x)|\leq v_{1}+v_{2},$ therefore
$\pi|\log Df_{2}^{q_n}(h(x))- \log D f_{1}^{q_n}(x)|/2(v_{1}+v_{2})$ and $\pi\delta/2(v_{1}+v_{2})$ belong to $[0, \pi/2],$ an
interval where "sin" is an increasing function. Hence, for all natural $n:$
$$
\underset{S^{1}}{\int} |\psi(f_{1}^{q_n}(x))-\psi(x)|dx \geq \underset{S_{\delta}^{n}}{\int}|\psi(f_{1}^{q_n}(x))-\psi(x)|dx
\geq \Big[2\sin\frac{\pi\delta}{2(v_{1}+v_{2})}\Big] \ell(S_{\delta}^{n}).
$$
 But $\ell(S_{\delta}^{n})$  does not tend to 0 when $n$ goes to $+\infty$. Hence
$\|\psi\circ f_{1}^{q_n}-\psi\|_{1}$ does not tend to 0 when $n$ goes to $+\infty,$
 this is contradicts to (\ref{eq21a}) and  ends the proof of  theorem \ref{theor 3.1.}.
\end{proof}

\section{Dynamical partitions and universal estimates}
In this section we will introduce two types of dynamical partitions and
we will get some estimations for the  ratios of length of elements of these partitions.
 Given a circle homeomorphism $f$ with irrational rotation number $\rho,$ one may
 consider a \emph{positive marked trajectory} (i.e. the positive trajectory of a marked point) $\xi_{i}=f^{i}(\xi_{0})\in S^{1},$ where $i\geq 0,$ and
 pick out of it the sequence of the \emph{dynamical convergents} $\xi_{q_{n}},$ $n\geq 0,$ indexed by the denominators of consecutive rational convergents to $\rho$. We will also conventionally use $\xi_{q_{-1}}=\xi_{0}-1.$ The well-understood arithmetical properties of rational convergents and the combinatorial equivalence between
 $f$ and linear rotation $f_{\rho}$ imply that the dynamical convergents approach the marked point, alternating their order in the following way:
$$
\xi_{q_{-1}}<\xi_{q_{1}}<\xi_{q_{3}}<...<\xi_{q_{2m+1}}<...<\xi_{0}<...<\xi_{q_{2m}}<...<\xi_{q_{2}}<\xi_{q_{0}}.
$$
 For the marked trajectory, we use the notations $\Delta^{n}_{0}=\Delta^{n}_{0}(\xi_{0})$
and $\Delta^{n}_{i}=f^{i}(\Delta^{n}_{0}),$  where $\Delta^{n}_{0}(\xi_{0})$ is $n$-th generator interval.
It is well known, that the set  of
intervals $\textbf{P}_{n}=\textbf{P}_n(\xi_0, f)$ with mutually disjoint interiors defined as
\begin{equation}\label{eq1}
\textbf{P}_{n}=\left\lbrace \Delta_{i}^{n-1}, \ 0\leq i<q_{n};
\ \Delta_{j}^{n}, \ 0\leq j<q_{n-1}\right\rbrace,
\end{equation}
determines a partition of the circle for any $n$.
The partition $\textbf{P}_{n}$ is called the $n$-th \emph{dynamical
partition} of the point
$\xi_{0}.$  Obviously the partition $\textbf{P}_{n+1}$ is a
refinement of the partition $\textbf{P}_{n}$: indeed the intervals of
order $n$ are members of $\textbf{P}_{n+1}$ and each interval $
\Delta_{i}^{n-1}\in \textbf{P}_{n} \, \ 0\leq
i<q_{n},$ is partitioned into $k_{n+1}+1$ intervals belonging to
$\textbf{P}_{n+1}$ such that
\begin{equation}\label{eq2}
\Delta_{i}^{n-1}=\Delta_{i}^{n+1}
\cup\bigcup_{s=0}^{k_{n+1}-1}\Delta_{i+q_{n-1}+sq_{n}}^{n}.
\end{equation}
\begin{tikzpicture}
\draw[thick] (0,0) -- (13,0);
\draw[fill=black] (0,0) circle (0.03cm)node[below]{$\xi_{q_{n}}$};
\draw[fill=black] (4,0) circle (0.03cm)node[below]{$\xi_{0}$};
\draw[fill=black] (13,0) circle (0.03cm)node[below]{$\xi_{q_{n-1}}$};
\draw[thick] (0,1) -- (4,1);
\draw[fill=black] (0,1) circle (0.03cm)node[below]{};
\draw[fill=black] (4,1) circle (0.03cm)node[below]{};
\draw[dotted] (0,1)--(0,0);
\draw[thick] (4,2) -- (13,2);
\draw[fill=black] (4,2) circle (0.03cm)node[below]{};
\draw[fill=black] (13,2) circle (0.03cm)node[below]{};
\draw[dotted] (4,2)--(4,0);
\draw[dotted] (13,2)--(13,0);
\draw[fill=white] (2,1.01) circle (0.0cm)node[above]{$\Delta_{0}^{n}$};
\draw[fill=white] (8.05,2.01) circle (0.0cm)node[above]{$\Delta_{0}^{n-1}$};
\draw[thick] (4,0.5) -- (8,0.5);
\draw[fill=black] (4,0.5) circle (0.03cm)node[below]{};
\draw[fill=black] (8,0.5) circle (0.03cm)node[below]{};
\draw[dotted] (5.5,0.5)--(5.5,0);
\draw[fill=black] (5.5,0.5) circle (0.03cm)node[below]{};
\draw[fill=black] (5.5,0) circle (0.03cm)node[below]{$\xi_{q_{n+1}}$};
\draw[fill=white] (4.7,0.501) circle (0.0cm)node[above]{$\Delta_{0}^{n+1}$};
\draw[dotted] (8,0.5)--(8,0);
\draw[fill=white] (7.2,0.501) circle (0.0cm)node[above]{$\Delta^{n}_{q_{n+1}-q_{n}}$};
\draw[dotted] (10.5,0.5)--(10.5,0);
\draw[thick] (10.5,0.5) -- (13,0.5);
\draw[fill=black] (10.5,0.5) circle (0.03cm)node[below]{};
\draw[fill=black] (13,0.5) circle (0.03cm)node[below]{};
\draw[fill=black] (8,0) circle (0.03cm)node[below]{$\xi_{q_{n+1}-q_{n}}$};
\draw[fill=black] (10.5,0) circle (0.03cm)node[below]{$\xi_{q_{n}+q_{n-1}}$};
\draw[fill=white] (12.2,0.501) circle (0.0cm)node[above]{$\Delta^{n}_{q_{n-1}}$};
\draw[dotted] (8,0.25) -- (10.5,0.25);
\draw[fill=white] (6.2, -1) circle (0.0cm)node[below]{\emph{The transition scheme from dynamical partition $\textbf{P}_{n}$
to $\textbf{P}_{n+1}$ for the case $i=0.$}};
\end{tikzpicture}
\\
\begin{defi}\label{Def1}
 Let  $0<K\leq 1$. We call two intervals of $S^{1}$ are \textbf{K-comparable} if the ratio of their lengths in $[K, K^{-1}].$
\end{defi}
 Let $f$  be a P-homeomorphism with irrational rotation number $\rho=\left[k_{1},k_{2},...,k_{n},...\right)$ of bounded type.
Here and later we denote by $\mathcal{Q}=\mathcal{Q}(\rho)=\underset{n}{\sup} (k_{n}).$
\begin{proper}\label{proper1}
 There exists universal constant $C_{2}=C_{2}(\mathcal{Q}, f)$ such that two consecutive atoms of  $\textbf{P}_{n}$
 are $C_{2}$-comparable.
 \end{proper}
\begin{proof}
Two consecutive atoms of  $\textbf{P}_{n}$ can be of the following three types:
$$
\text{(I)}\,\ \Delta_{i}^{n-1}\ \text{and}\, \,\ \Delta_{i+q_{n-1}}^{n-1}, \,\,\  \text{(II)}\,\
\Delta_{i}^{n}\,\ \text{and} \,\ \Delta_{i}^{n-1}, \,\,\ \text{(III)}\,\
\Delta_{i}^{n}\,\ \text{and}\,\ \Delta_{i+q_{n}-q_{n-1}}^{n-1}.
$$
In (I), the consecutive atoms are $I$ and $f^{q_{n-1}}(I).$ Since, $f$ has finite number break points, by the mean value theorem, we have
$$
\inf_{I}(Df^{q_{n-1}})\leq \frac{|f^{q_{n-1}}(I)|}{|I|}\leq \sup_{I}(Df^{q_{n-1}}).
$$
From Denjoy  inequalities, it follows that $e^{-v}\leq |f^{q_{n-1}}(I)|/|I|\leq e^{v}.$
Consider the case (II). Using equality (\ref{eq2}) we get
$$
\frac{|\Delta_{i}^{n-1}|}{|\Delta_{i}^{n}|}=
\frac{|\Delta_{i}^{n+1}|
+\underset{s=0}{\overset{k_{n+1}-1}{\sum}}|\Delta_{i+q_{n-1}+sq_{n}}^{n}|}
{|\Delta_{i}^{n}|}.
$$
It is clear that $\Delta_{i}^{n+1}\subset \Delta_{-q_{n}+i}^{n}.$ Using similar arguments (I) we have
$$
\sum_{s=0}^{k_{n+1}-1}\underset{\Delta_{i}^{n}}{\inf}(Df^{q_{n-1}+sq_{n}})\leq\frac{|\Delta_{i}^{n-1}|}{|\Delta_{i}^{n}|}\leq \sum_{s=0}^{k_{n+1}-1}\underset{\Delta_{i}^{n}}{\sup}(Df^{q_{n-1}+sq_{n}})+\underset{\Delta_{i}^{n}}{\sup}(Df^{-q_{n}}).
$$
Using Denjoy inequalities, we get $\inf(Df^{q_{n-1}+sq_{n}})\geq e^{-(s+1)v}$ and
$\sup(Df^{q_{n-1}+sq_{n}})\leq e^{(s+1)v}.$ Furthermore, the rotation number is bounded type i.e. $k_{n+1}\leq \mathcal{Q},$ hence it follows $|\Delta_{i}^{n-1}|$ and $|\Delta_{i}^{n}|$ are $[(\mathcal{Q}+1)e^{(\mathcal{Q}+1)v}]^{-1}$-comparable.
Now, we consider the case (III). By Denjoy inequalities intervals
$|\Delta_{i}^{n-1}|$ and $|\Delta_{i+q_{n}-q_{n-1}}^{n-1}|$
are $e^{-2v}$ -comparable. Using above argument $|\Delta_{i}^{n-1}|$ and $|\Delta_{i}^{n}|$ are $[(\mathcal{Q}+1)e^{(\mathcal{Q}+1)v}]^{-1}$-comparable.  Finally, any two consecutive atoms $\textbf{P}_{n}$ are $C_{2}=[(\mathcal{Q}+1)e^{(\mathcal{Q}+3)v}]^{-1}$-comparable.
\end{proof}
\begin{proper}\label{proper2}  There exists universal constant $C_{3}=C_{3}(\mathcal{Q}, f)\leq 1$ such that an atom $\Delta^{n+1}$ of $\textbf{P}_{n+1}$
 is $C_{3}$-comparable to the atom $\Delta^{n}$ of $\textbf{P}_{n}$ that contains it.
 \end{proper}
\begin{proof}
The atom $\Delta^{n+1}$ of $\textbf{P}_{n+1}$  can be of following three types:
\begin{enumerate}
  \item[-] either $\Delta_{i}^{n}$ \,such that\,\ $\Delta_{i}^{n}\in \textbf{P}_{n}$ or
  \item[-] $\Delta_{i+q_{n-1}+sq_{n}}^{n}$\,\ such that\,\
  $\Delta_{i+q_{n-1}+sq_{n}}^{n}\subset \Delta_{i}^{n-1}$ \, and \,$\Delta_{i}^{n-1}\in \textbf{P}_{n}$ or
  \item[-]  $\Delta^{n+1}_{i}$ \,\ such that \,\ $\Delta^{n+1}_{i}\subset \Delta^{n-1}_{i}$ \,\ and \,\
      $\Delta^{n-1}_{i} \in \textbf{P}_{n}.$
\end{enumerate}
In the first case the atoms $\Delta^{n+1}$ and $\Delta^{n}$ are 1-comparable. In the second  case, using similar argument to the above property
we get the atoms $\Delta^{n+1}$ and $\Delta^{n}$ are $[(\mathcal{Q}+1)e^{(\mathcal{Q}+1)v}]^{-1}$-comparable
and the third case we get the atoms $\Delta^{n+1}$ and $\Delta^{n}$ are $[(\mathcal{Q}+1)e^{(\mathcal{Q}+1)v}]^{-2}$-comparable.
Finally, if we take $C_{3}=[(\mathcal{Q}+1)e^{(\mathcal{Q}+1)v}]^{-2}$ then we are done.
\end{proof}
Note that in the second and third cases of the above property the number $C_{3}$
is the greatest lower bound but $C^{-1}_{3}$ can not be the least upper bound. Using
Denjoy inequalities and relation (\ref{eq2})  can be found the number $C_{4}=(1+e^{-v})^{-1}$ is the
least upper bound.
\begin{proper}\label{inter nisbat va expon}
There exist constants $0<\kappa<\lambda<1$  such that
 for all $n,\, m \in\mathbb{N}$ holds the following inequality
  $\kappa^{m}|\Delta^{n}_{0}|\leq|\Delta^{n+m}_{0}|\leq (1+e^{v}) \lambda^{m} |\Delta^{n}_{0}|$.
\end{proper}
\begin{proof}
 By definition of dynamical partition, it is easy to see that  $|\Delta_{0}^{n-1}|\geq |\Delta_{0}^{n+1}|+|\Delta_{q_{n+1}-q_{n}}^{n}|$ and $\Delta_{0}^{n+1}\subset \Delta_{q_{n+1}}^{n}.$  Using property \ref{proper2} and proper last two relations together with
  Denjoy inequalities implies
$$
C_{3}^{-1}\geq\frac{|\Delta_{0}^{n-1}|}{|\Delta_{0}^{n+1}|}\geq 1+\frac{|\Delta_{q_{n+1}-q_{n}}^{n}|}{|\Delta_{q_{n+1}}^{n}|}\geq 1+e^{-v}.
$$
By induction way we can show that if $m$ is even then
$$
C_{3}^{\frac{m}{2}}|\Delta_{0}^{n}|\leq |\Delta_{0}^{n+m}|\leq  (1+e^{-v})^{-\frac{m}{2}}|\Delta_{0}^{n}|.
$$
 If $m$ is odd then, using property \ref{proper1} we have
$$
C_{3}^{\frac{m-1}{2}}C_{2}|\Delta_{0}^{n}|\leq |\Delta_{0}^{n+m}|\leq  e^{v}(1+e^{-v})^{-\frac{m-1}{2}}|\Delta_{0}^{n}|.
$$
If we take $\kappa=\min\{\sqrt{C_{3}}, C_{2}\}$ and  $\lambda=1/\sqrt{1+e^{-v}}$
 then we are done.
\end{proof}
Apply at most three times Finzi inequality to property \ref{inter nisbat va expon}  we get following remark.
\begin{rem}\label{inter nisbat va expon rem}
 For all $n,\, m \in\mathbb{N}$ holds the following inequality
  $e^{-3v}\kappa^{m}|\Delta^{n}|\leq|\Delta^{n+m}|\leq (1+e^{v})e^{3v} \lambda^{m} |\Delta^{n}|,$
   where $\Delta^{n}\in \textbf{P}_{n},$
$\Delta^{n+m}\in \textbf{P}_{n+m}$ and $\Delta^{n+m}\subset\Delta^{n}.$
\end{rem}
Using this remark we show that the oscillation of $\log Df^{k}$
tends to zero with exponential fast on every exponential small continuity intervals of $Df^{k}$.

\begin{lemm}\label{universal estimates}\textbf{(Universal estimates)}  Let $\log Df$ be an absolutely
continuous each  continuity interval of $Df$ and  $D \log Df \in L_{p}$ for some $p>1,$
then for all $n,$ $l$ and natural numbers, for all integer $k$ such that $0\leq k\leq q_{n}$ and
 for all $\xi\in S^{1},$ $\eta \in \Delta_{0}^{n+l}(\xi)$ in the same continuity interval of $Df^{k},$ there exists a universal
 constant $C_{5}=C_{5}(f)>0$ such that
$$
|\log Df^{k}(\xi)-\log Df^{k}(\eta)|\leq C_{5}\lambda^{l/q},
$$
 where $q^{-1}=1-p^{-1}.$
\end{lemm}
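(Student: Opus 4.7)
The plan is to telescope $\log Df^k$ by the chain rule along the orbit, then apply H\"older's inequality twice, exploiting the exponential shrinkage of dynamical partition atoms supplied by Remark \ref{inter nisbat va expon rem}. Writing
$$
\log Df^k(\xi)-\log Df^k(\eta)=\sum_{i=0}^{k-1}\bigl[\log Df(f^i(\xi))-\log Df(f^i(\eta))\bigr],
$$
the hypothesis that $\xi,\eta$ belong to a common continuity interval of $Df^k$ forces $f^i(\xi),f^i(\eta)$ to belong to a common continuity interval of $Df$ for each $i$. Absolute continuity of $\log Df$ there, together with the fundamental theorem of calculus, bounds each summand by $\int_{\Delta^{n+l}_i}|D\log Df(t)|\,dt$, where $\Delta^{n+l}_i:=f^i(\Delta_0^{n+l}(\xi))$.

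I would then apply H\"older with conjugate exponents $(p,q)$ pointwise and discretely:
$$
|\log Df^k(\xi)-\log Df^k(\eta)|\leq\Bigl(\sum_{i=0}^{k-1}\|D\log Df\|_{L^p(\Delta^{n+l}_i)}^p\Bigr)^{1/p}\Bigl(\sum_{i=0}^{k-1}|\Delta^{n+l}_i|\Bigr)^{1/q}.
$$
Since $k\leq q_n\leq q_{n+l}$ and the $q_{n+l}$ iterates of the generator $\Delta_0^{n+l}(\xi)$ are pairwise disjoint, the first factor is at most $\|D\log Df\|_p$, finite by the (K.O) hypothesis. For the second factor, the key observation is that, according to the parity of $l$, the generator $\Delta_0^{n+l}(\xi)$ sits inside one of the two atoms of $\textbf{P}_{n+1}$ adjacent to $\xi$, namely $\Delta_0^n$ or $\Delta_0^{n+1}$; call it $A_0$. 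Pushing forward by $f^i$ we get $\Delta^{n+l}_i\subset A_i:=f^i(A_0)$, and for $0\leq i<q_n$ the $A_i$ remain disjoint atoms of $\textbf{P}_{n+1}$, so $\sum_i|A_i|\leq 1$. Applying Remark \ref{inter nisbat va expon rem} to each nested pair $\Delta^{n+l}_i\subset A_i$ gives $|\Delta^{n+l}_i|\leq(1+e^v)e^{3v}\lambda^{l-1}|A_i|$ and hence $\sum_i|\Delta^{n+l}_i|\leq(1+e^v)e^{3v}\lambda^{l-1}$.

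Combining the estimates yields $|\log Df^k(\xi)-\log Df^k(\eta)|\leq C_5\lambda^{l/q}$ with $C_5=\|D\log Df\|_p\bigl[(1+e^v)e^{3v}\lambda^{-1}\bigr]^{1/q}$; the degenerate case $l=0$ reduces to the Finzi inequality. The main obstacle is the bound on $\sum_i|\Delta^{n+l}_i|$: naive disjointness of the iterates only yields the trivial value $1$, and one must instead pair each iterate with a disjointly-indexed parent atom $A_i\in\textbf{P}_{n+1}$ so that Remark \ref{inter nisbat va expon rem} can pull the geometric factor $\lambda^{l-1}$ outside the sum while the $|A_i|$ still sum to at most one. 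The (K.O) hypothesis $D\log Df\in L^p$ is precisely what allows this length-decay to be converted into decay of the oscillation via $L^p$--$L^q$ duality, and the exponent $1/q$ in the final bound is the price paid for this conversion.
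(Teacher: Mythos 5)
Your proof is correct and takes essentially the same route as the paper's: telescope $\log Df^{k}$ along the orbit, use absolute continuity to convert each summand into $\int |D\log Df|$ over $f^{i}([\xi,\eta])$, apply H\"older with exponents $(p,q)$, and estimate the total length of the union $\bigcup_{i<k} f^{i}([\xi,\eta])$ by comparing $\Delta_i^{n+l}$ with a disjoint family of atoms of a fixed-level partition and invoking Remark~\ref{inter nisbat va expon rem}. The paper bundles the union into a single set $U$ and applies H\"older once, whereas you apply H\"older per-interval and then again discretely, but these are equivalent; you are in fact slightly more careful than the printed argument, which compares $\Delta_i^{n+l}$ directly with $\Delta_i^{n}$ and thereby implicitly assumes $l$ even, while your choice of $A_0\in\{\Delta_0^{n},\Delta_0^{n+1}\}$ according to the parity of $l$ keeps the nesting hypothesis of Remark~\ref{inter nisbat va expon rem} honest in all cases.
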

\begin{proof}
Fix $n\in \mathbb{N}$ and $k\in \{0, 1, ..., q_{n}\}.$ Let
$\xi\in S^{1}$ and  $\eta \in \Delta_{0}^{n+l}(\xi)$  be two circle points
lying in the same continuity interval of $Df^{k}.$ Then, we have
$$
|\log Df^{k}(\xi)-\log Df^{k}(\eta)|\leq \sum_{j=0}^{k-1}|\log Df(f^{k}(\xi))-\log Df(f^{k}(\eta))|
\leq
$$
$$
\leq\sum_{j=0}^{k-1}\int_{f^{k}(\eta)}^{f^{k}(\xi)}|D\log Df(s)|ds=\int|1_{U}D\log Df(s)|ds\leq \|1_{U}\|_{q}\|D\log Df\|_{p},
$$
where $U=\underset{j=0}{\overset{k-1}{\bigcup}}f^{j}([\xi, \eta]).$
Using remark \ref{inter nisbat va expon rem} we get
$$
\ell(U)=\sum_{j=0}^{k-1}|f^{k}(\eta)-f^{k}(\xi)|\leq \sum_{j=0}^{k-1}\frac{|\Delta^{n+l}_{j}(\xi)|}{|\Delta^{n}_{j}(\xi)|}
|\Delta^{n}_{j}(\xi)|\leq (1+e^{v})e^{3v} \lambda^{l}.
$$
If we take $C_{5}=[(1+e^{v})e^{3v}]^{\frac{1}{q}} \|D\log Df\|_{p}$ then we are done.
\end{proof}
Now we introduce a new  partition $\textbf{D}_{n}=\textbf{D}_{n}(\xi_0, f)$ of circle.
It will be known at section 6, that why we need introduce the  new  partition $\textbf{D}_{n}.$
So, we consider a \emph{full marked trajectory} (i.e. the full trajectory of a marked point) $\xi_{i}=f^{i}(\xi_{0})\in S^{1},$ $i\in \mathbb{Z}$ and
 pick out of it the sequence of the dynamical convergents $\xi_{\pm q_{n}},$ $n\geq 0.$
 It is well known
$$
\xi_{q_{-1}}<...<\xi_{q_{2m-1}}<\xi_{-q_{2m}}<\xi_{q_{2m+1}}<...<\xi_{0}<...<\xi_{q_{2m}}<\xi_{-q_{2m-1}}<\xi_{q_{2m-2}}<...<\xi_{q_{0}}.
$$
Using the $n$-th fundamental interval of partition $\textbf{P}_{n}=\textbf{P}_{n}(\xi_0, f),$ we define the
$n$-th fundamental intervals of dynamical partition $\textbf{D}_{n}$ as the following:
$I^{n}_{0}=\Delta^{n}_{0}(\xi_{-q_{n}})$ and $I^{n-1,n}_{0}=\Delta^{n-1}_{0}\setminus \Delta^{n}_{0}(\xi_{-q_{n}}).$
It is well known, that the set of
intervals
\begin{equation}\label{eq3}
\textbf{D}_{n}=\left\lbrace I_{i}^{n-1,n}, \ 0\leq i<q_{n};
\ I_{j}^{n}, \ 0\leq j<q_{n}+q_{n-1}\right\rbrace
\end{equation}
with mutually disjoint interiors defined as
determines a partition of the circle for any $n$.
It is clear that the partition $\textbf{D}_{n}$ is a subpartition of $\textbf{P}_{n}$
obtained by adding some negative iterates of $\xi_{0}$.
  The partition $\textbf{D}_{n+1}$ is a refinement of the partition
 $\textbf{D}_{n}.$ As we go to $\textbf{D}_{n+1}$ the following occurs.
Each of intervals $I_{j}^{n}, \ 0\leq j<q_{n}+q_{n-1}$
is partitioned into two intervals belonging to $\textbf{D}_{n+1},$ one of them $I^{n+1}$
form and second is $I^{n,n+1}$ form. Similarly, each of intervals $I_{i}^{n-1,n}, \ 0\leq i<q_{n}$
is partitioned into $2k_{n+1}-1$ intervals belonging to $\textbf{D}_{n+1},$
$k_{n+1}-1$ of them  $I^{n,n+1}$ form and $k_{n+1}$
of them $I^{n+1}$ form.
Moreover, any two consecutive subintervals of $I_{i}^{n-1,n}, \ 0\leq i<q_{n}$ are  $I^{n,n+1}$  and
 $I^{n+1}$ form. Each $I^{n+1}$ (or $I^{n,n+1}$) form interval goes to the next $I^{n+1}$ (or $I^{n,n+1}$)
 form interval by $f^{\pm q_{n}}.$
\begin{tikzpicture}
\draw[thick] (0,0) -- (13,0);
\draw[fill=black] (0,0) circle (0.03cm)node[below]{$\xi_{q_{n}}$};
\draw[fill=black] (4,0) circle (0.03cm)node[below]{$\xi_{0}$};
\draw[fill=black] (13,0) circle (0.03cm)node[below]{};
\draw[fill=black] (13.5,0) circle (0.0cm)node[below]{$\xi_{q_{n-1}}$};
\draw[thick] (0,1.5) -- (4,1.5);
\draw[fill=black] (0,1.5) circle (0.03cm)node[below]{};
\draw[fill=black] (4,1.5) circle (0.03cm)node[below]{};
\draw[dotted] (0,1.5)--(0,0);
\draw[thick] (4,2.5) -- (13,2.5);
\draw[fill=black] (4,2.5) circle (0.03cm)node[below]{};
\draw[fill=black] (13,2.5) circle (0.03cm)node[below]{};
\draw[dotted] (4,2.5)--(4,0);
\draw[dotted] (13,2.5)--(13,0);
\draw[fill=white] (2,1.51) circle (0.0cm)node[above]{$I_{q_{n}}^{n}=\Delta_{0}^{n}$};

%D_{n+1} ning elementlari boshlandi
\draw[thick] (0,0.7) -- (4,0.7);
\draw[dotted] (2.8,0.7)--(2.8,0);
\draw[fill=black] (0, 0.7) circle (0.03cm)node[below]{};
\draw[fill=black] (2.8, 0.7) circle (0.03cm)node[below]{};
\draw[fill=black] (2.8,0) circle (0.03cm)node[below]{$\xi_{-q_{n+1}}$};
\draw[fill=white] (3.4,0.701) circle (0.0cm)node[above]{$I^{n+1}_{0}$};
\draw[fill=white] (1.4,0.701) circle (0.0cm)node[above]{$I^{n,n+1}_{0}$};
%D_{n+1} ning elementlari tugadi

\draw[fill=white] (5.5,2.501) circle (0.0cm)node[above]{$I_{0}^{n}$};
\draw[fill=white] (8.5,2.501) circle (0.0cm)node[above]{$I_{0}^{n-1,n}$};
\draw[fill=black] (7,2.5) circle (0.03cm)node[below]{};
\draw[thick] (4,0.7) -- (8,0.7);
\draw[fill=black] (4,0.7) circle (0.03cm)node[below]{};
\draw[fill=black] (7,0.7) circle (0.03cm)node[below]{};
\draw[fill=white] (6.3,0.701) circle (0.0cm)node[above]{$I^{n,n+1}_{q_{n+1}-q_{n}}$};
\draw[fill=white] (7.9,0.701) circle (0.0cm)node[above]{$I^{n+1}_{q_{n+1}-q_{n}}$};
\draw[dotted] (5.5,0.7)--(5.5,0);
\draw[fill=black] (5.5,0.7) circle (0.03cm)node[below]{};
\draw[fill=black] (5.5,0) circle (0.03cm)node[below]{$\xi_{q_{n+1}}$};
\draw[fill=white] (4.7,0.701) circle (0.0cm)node[above]{$I_{q_{n+1}}^{n+1}$};
\draw[dotted] (8,0.5)--(8,0);
\draw[dotted] (10.5,0.7)--(10.5,0);
\draw[dotted] (12,0.7)--(12,0);
\draw[fill=black] (12,0.7) circle (0.03cm)node[below]{};
\draw[fill=black] (12,0) circle (0.03cm)node[below]{};
\draw[fill=black] (12.2,0) circle (0.0cm)node[below]{$\xi_{q_{n-1}-q_{n+1}}$};
\draw[thick] (10.5,0.7) -- (13,0.7);
\draw[fill=black] (10.5,0.7) circle (0.03cm)node[below]{};
\draw[fill=black] (13,0.7) circle (0.03cm)node[below]{};
\draw[fill=black] (8,0) circle (0.03cm)node[below]{};
\draw[fill=black] (8.5,0) circle (0.0cm)node[below]{$\xi_{q_{n+1}-q_{n}}$};
\draw[fill=black] (8,0.7) circle (0.03cm)node[below]{};

%D_{n} boshlanishi
\draw[fill=black] (7,0) circle (0.03cm)node[below]{$\xi_{-q_{n}}$};
\draw[dotted] (7,2.5) -- (7,0);
\draw[fill=black] (7,0.7) circle (0.03cm)node[below]{};
%D_{n} tugashi
\draw[fill=black] (10.5,0) circle (0.03cm)node[below]{$\xi_{q_{n}+q_{n-1}}$};
\draw[fill=white] (12.5,0.701) circle (0.0cm)node[above]{$I^{n+1}_{q_{n-1}}$};
\draw[fill=white] (11.3,0.701) circle (0.0cm)node[above]{$I^{n,n+1}_{q_{n-1}}$};
\draw[dotted] (8,0.25) -- (10.5,0.25);
\draw[fill=white] (6.2, -1) circle (0.0cm)node[below]{\emph{The transition scheme from dynamical partition $\textbf{D}_{n}$
to $\textbf{D}_{n+1}$ for the case $i=0$}.};
\end{tikzpicture}
\\
The partition  $\textbf{D}_{n}$ is a subpartition of $\textbf{P}_{n}$
in the sense that each interval of the partition
 $\textbf{P}_{n}$ consists of entire of intervals of the partition $\textbf{D}_{n}.$
 Now we prove some properties of the partition $\textbf{D}_{n}$.
\begin{proper}\label{proper3}
There exists universal constant $C_{6}=C_{6}(\mathcal{Q}, f)\leq 1$ such that
any interval of $\textbf{D}_{n}$ is $C_{6}$-comparable to the interval of $\textbf{P}_{n}$ that contains it.
\end{proper}
 \begin{proof}
By definitions of dynamical partitions $\textbf{P}_{n}$ and $\textbf{D}_{n},$ it is easy to see that between
intervals of  dynamical partitions $\textbf{P}_{n}$ and $\textbf{D}_{n}$ has following relations:
\begin{equation}\label{eq4}
\Delta_{j}^{n-1}=I_{j}^{n}\cup I_{j}^{n-1,n}, \,\, 0\leq j<q_{n}
\,\,\, \text{and}\,\,\, \Delta_{i}^{n}=I_{i+q_{n}}^{n}, \ 0\leq i<q_{n-1}.
\end{equation}
From this relations implies that the intervals $\Delta_{i}^{n}$ and $I_{i+q_{n}}^{n}, \ 0\leq i<q_{n-1}$
are $1$-comparable. Now we prove that both intervals  $I_{j}^{n}$ and $I_{j}^{n-1,n}$ comparable to the
$\Delta_{j}^{n-1},\,\, 0\leq j<q_{n}.$  It is clear that
$$
\underset{s=0}{\overset{k_{n+1}-1}{\bigcup}}f^{-sq_{n}}(I_{j}^{n})
\subset\Delta_{j}^{n-1}\subset\underset{s=0}{\overset{k_{n+1}}{\bigcup}}f^{-sq_{n}}(I_{j}^{n}).
$$
Considering this and using Denjoy inequalities together with relations (\ref{eq4}) we get
\begin{equation}\label{eq5}
1+e^{-v}\leq
\frac{|\Delta_{j}^{n-1}|}{|I_{j}^{n}|}\leq 1+\mathcal{Q}e^{\mathcal{Q}v}\,\,\,
\text{and}\,\,\
1+\frac{1}{\mathcal{Q}e^{\mathcal{Q}v}}\leq
\frac{|\Delta_{j}^{n-1}|}{|I_{j}^{n-1,n}|}\leq 1+e^{v}.
\end{equation}
Finally, if we take $C_{6} = [1+\mathcal{Q}e^{\mathcal{Q}v}]^{-1}$ then we are done.
\end{proof}
Since, the partition  $\textbf{D}_{n}$ is a subpartition of $\textbf{P}_{n}$
using property \ref{proper1} and property \ref{proper3} we get the following remark.
\begin{rem}\label{con inter}
Any two consecutive intervals of  $\textbf{D}_{n}$
 are $C_{2}C_{6}^{2}$-comparable.
 \end{rem}
\begin{proper}\label{proper4}  There exists universal constants $C_{7}=C_{7}(\mathcal{Q}, f)$ and $C_{8}=C_{8}(\mathcal{Q}, f)$ which is
 $0<C_{7}<C_{8}\leq 1$ such that
 $
 C_{7}|I^{n}|\leq |I^{n+1}|\leq C_{8}|I^{n}|
 $
 where $I^{n+1}\in\textbf{D}_{n+1},$
$I^{n}\in\textbf{D}_{n}$ and $I^{n+1}\subset I^{n}.$
\end{proper}
\begin{proof}
First we obtain all this $|I^{n+1}_{i}|/|I^{n,n+1}_{i}|,$ $0\leq i<q_{n+1}$
ratios, where  $I^{n+1}_{i}$ and $I^{n,n+1}_{i}$ are two consecutive intervals of  $\textbf{D}_{n+1}.$
It is clear that
\begin{equation}\label{eq6}
\underset{s=1}{\overset{k_{n+2}-1}{\bigcup}}f^{sq_{n+1}}(I_{i}^{n+1})
\subset I_{i}^{n,n+1}\subset\underset{s=1}{\overset{k_{n+2}}{\bigcup}}f^{sq_{n+1}}(I_{i}^{n+1}).
\end{equation}
Using Denjoy inequalities we get
\begin{equation}\label{eq7}
(\mathcal{Q}e^{\mathcal{Q}v})^{-1}|I^{n,n+1}_{i}|\leq |I^{n+1}_{i}|\leq e^{v}|I^{n,n+1}_{i}|.
\end{equation}
If  $I^{n}=I^{n}_{i}\in\textbf{D}_{n},$ $0\leq i<q_{n}+q_{n-1}$ then
the interval $I^{n}_{i}$ is partitioned into two intervals belonging to $\textbf{D}_{n+1},$
one of them $I^{n+1}$ form and second is $I^{n,n+1}$ form. Using (\ref{eq7}) we have
 \begin{equation}\label{eq8}
\frac{1}{1+\mathcal{Q}e^{\mathcal{Q}v}}\leq \frac{|I^{n+1}|}{|I^{n}_{i}|}\leq \frac{e^{v}}{1+e^{v}}\,\,\,\text{and}\,\,\,
\frac{1}{1+e^{v}}\leq \frac{|I^{n,n+1}|}{|I^{n}_{i}|}\leq \frac{\mathcal{Q}e^{\mathcal{Q}v}}{1+\mathcal{Q}e^{\mathcal{Q}v}}.
\end{equation}
If  $I^{n}=I_{i}^{n-1,n}\in\textbf{D}_{n},$ $0\leq i<q_{n}$ then
the interval $I_{i}^{n-1,n}$ is partitioned into $2k_{n+1}-1$ intervals belonging to $\textbf{D}_{n+1},$
$k_{n+1}-1$ of them  $I^{n,n+1}$ form and $k_{n+1}$ of them $I^{n+1}$ form.
In particularly if $k_{n+1}=1$ then $I^{n+1}=I_{i}^{n-1,n},$ if $k_{n+1}\geq 2$ then
for any two consecutive subintervals $I^{n+1},$ $I^{n,n+1}$ of $I_{i}^{n-1,n}$
using properties of  partition $\textbf{D}_{n+1}$ and Denjoy inequalities
we have
 \begin{equation}\label{eq9}
 |I^{n+1}|(\underset{s=0}{\overset{k_{n+1}-1}{\sum}}e^{-sv})+
|I^{n,n+1}|(\underset{s=0}{\overset{k_{n+1}-2}{\sum}}e^{-sv})
\leq I_{i}^{n-1,n}\leq |I^{n+1}|(\underset{s=0}{\overset{k_{n+1}-1}{\sum}}e^{sv})+
|I^{n,n+1}|(\underset{s=0}{\overset{k_{n+1}-2}{\sum}}e^{sv})
\end{equation}
Considering (\ref{eq7}), (\ref{eq8}) and (\ref{eq9}) we get
\begin{equation}\label{eq10}
\frac{1}{\mathcal{Q}e^{\mathcal{Q}v}(1+\mathcal{Q}e^{\mathcal{Q}v})}\leq \frac{|I^{n+1}|}{|I^{n-1,n}_{i}|}\leq \frac{e^{v}}{2+e^{v}}
\end{equation}
and
\begin{equation}\label{eq11}
\frac{1}{(1+e^{v})\mathcal{Q}e^{\mathcal{Q}v}}\leq \frac{|I^{n,n+1}|}{|I^{n-1,n}_{i}|}\leq \frac{\mathcal{Q}e^{(\mathcal{Q}+1)v}}{1+e^{v}+\mathcal{Q}e^{(\mathcal{Q}+1)v}}.
\end{equation}
Denote by $C_{7}=[\mathcal{Q}e^{\mathcal{Q}v}(1+\mathcal{Q}e^{\mathcal{Q}v})]^{-1}$ and $C_{8}=\mathcal{Q}e^{\mathcal{Q}v}[1+\mathcal{Q}e^{\mathcal{Q}v}]^{-1}.$
Finally, if $k_{n+1}=1$ then $C_{7}|I^{n}|\leq |I^{n+1}|\leq |I^{n}|,$
if $k_{n+1}\geq 2$ then $C_{7}|I^{n}|\leq |I^{n+1}|\leq C_{8}|I^{n}|.$
\end{proof}
Therefore, every interval of $\textbf{D}_{n}$ contains at least
two interval of $\textbf{D}_{n+2},$ using this we get following
remark.
\begin{rem}\label{r1}
Let $I^{n+2}\in\textbf{D}_{n+2},$ $I^{n}\in\textbf{D}_{n}$ and $I^{n+2}\subset I^{n}.$
Then the following inequalities holds
\begin{equation}\label{12}
C_{7}^{2}|I^{n}|\leq |I^{n+2}|\leq C_{8}|I^{n}|.
\end{equation}
 \end{rem}
\section{Universal bounds to the barycentric coefficients}
Let $f$ be a P-homeomorphism with two break points $a,$ $b$ which is does not lie on the
same orbit and with irrational rotation number of bounded type.
Consider partition $\textbf{D}_{n}=\textbf{D}_{n}(a, f).$
Denote by $\mathcal{I}^{n}(b)$ the interval of $\textbf{D}_{n}$ that contains the point $b.$
In the following discussion we have to compare
different intervals. Let $[\alpha, \beta]$ be an interval in $S^{1}$ and $\gamma\in[\alpha, \beta].$
\emph{The barycentric coefficient} of $\gamma$ in $[\alpha, \beta]$ is the ratio
$$
\mathfrak{B}(\gamma; [\alpha, \beta])=\frac{|[\alpha, \gamma]|}{|[\alpha, \beta]|}.
$$
A \emph{universal bound} for $f$ is a constant that does not depend on $n$ and
does not depend on point. Now we show that there exists a subsequence $(n_{s})$ of $\mathbb{N}$ such that
the barycentric coefficient of the point $b$ in $\mathcal{I}^{n_s}=\mathcal{I}^{n_s}(b)$
is universal bounded in $(0, 1).$
\begin{prop}\label{proposition 4.4}
Let $f$ be a P-homeomorphism with two break points $a,$ $b$ which is does not lie on the
same orbit and with irrational rotation number of bounded type.
 Then there exists a subsequence $n_{s}\in\mathbb{N}$ such that for all $n_{s}$ holds the
 following inequalities
 $C_{7}^{2}\leq\mathfrak{B}(b; \mathcal{I}^{n_s})\leq 1-C_{7}^{2}.$
\end{prop}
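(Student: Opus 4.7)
Write $\mathcal{I}^n(b) = [L_n, R_n]$, so that $\mathfrak{B}(b; \mathcal{I}^n) = (b - L_n)/|\mathcal{I}^n|$. The plan is to argue by contradiction: assume there is some $N_0$ such that $\mathfrak{B}(b; \mathcal{I}^n) \notin [C_7^2, 1 - C_7^2]$ for every $n \geq N_0$, and show that this forces $b$ onto the orbit of $a$, contradicting hypothesis (4) of Theorem \ref{ADM1}.

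The first step is a pinning lemma: if $\mathfrak{B}(b; \mathcal{I}^n) < C_7^2$ then $L_{n+1} = L_n$, i.e., $\mathcal{I}^{n+1}(b)$ is the leftmost subinterval of $\mathcal{I}^n$ in $\textbf{D}_{n+1}$. Indeed, by Property \ref{proper4} every subinterval of $\mathcal{I}^n$ belonging to $\textbf{D}_{n+1}$ has length at least $C_7 |\mathcal{I}^n|$, so the leftmost such subinterval covers $[L_n, L_n + C_7 |\mathcal{I}^n|]$; the hypothesis $b - L_n < C_7^2 |\mathcal{I}^n| < C_7 |\mathcal{I}^n|$ (valid since $C_7 < 1$) then places $b$ inside that subinterval. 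The symmetric statement gives $R_{n+1} = R_n$ when $\mathfrak{B}(b; \mathcal{I}^n) > 1 - C_7^2$.

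The second step is a propagation inequality. If $\mathfrak{B}(b; \mathcal{I}^n) < C_7^2$, then using $L_{n+1} = L_n$ and $|\mathcal{I}^{n+1}| \geq C_7 |\mathcal{I}^n|$ one gets
\[
\mathfrak{B}(b; \mathcal{I}^{n+1}) = \frac{b - L_n}{|\mathcal{I}^{n+1}|} < \frac{C_7^2 |\mathcal{I}^n|}{C_7 |\mathcal{I}^n|} = C_7.
\]
Since $\mathcal{Q} \geq 1$ and $v \geq 0$, the explicit formula $C_7 = [\mathcal{Q}e^{\mathcal{Q}v}(1 + \mathcal{Q}e^{\mathcal{Q}v})]^{-1}$ yields the bound $C_7 \leq 1/2$, so $C_7^2 + C_7 < 1$ and hence $C_7 < 1 - C_7^2$. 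Consequently $\mathfrak{B}(b; \mathcal{I}^{n+1}) < 1 - C_7^2$, and the standing contradiction hypothesis forces $\mathfrak{B}(b; \mathcal{I}^{n+1}) < C_7^2$. By induction, if $\mathfrak{B}(b; \mathcal{I}^{N_0}) < C_7^2$ then $\mathfrak{B}(b; \mathcal{I}^m) < C_7^2$ and $L_m = L_{N_0}$ for every $m \geq N_0$. The case $\mathfrak{B}(b; \mathcal{I}^{N_0}) > 1 - C_7^2$ is symmetric and produces $R_m = R_{N_0}$ for every $m \geq N_0$.

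To finish, recall that the rotation number is of bounded type, so by Property \ref{inter nisbat va expon} combined with Property \ref{proper3} the lengths of the atoms of $\textbf{D}_n$ shrink exponentially and $|\mathcal{I}^m| \to 0$. In the left-pinning case, $R_m = L_{N_0} + |\mathcal{I}^m| \to L_{N_0}$ and $b \in [L_{N_0}, R_m]$ for all $m \geq N_0$ together force $b = L_{N_0}$. But $L_{N_0}$ is an endpoint of an atom of $\textbf{D}_{N_0}(a, f)$, hence of the form $f^k(a)$ for some $k \in \mathbb{Z}$, so $b$ and $a$ would lie on the same orbit, a contradiction. The only non-combinatorial step is the numerical estimate $C_7 < 1 - C_7^2$ (which is the main bookkeeping obstacle); once it is established, the pinning lemma together with the hypothesis that $a$ and $b$ avoid a common orbit forces the existence of the desired subsequence $n_s$.
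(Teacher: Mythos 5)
Your proof is correct, and it takes a genuinely different route from the paper's. The paper, under the same contradiction hypothesis, first observes (via Property~\ref{proper4} and Remark~\ref{r1}) that $b$ must always sit in an extremal subinterval of $\mathcal{I}^n$, then invokes the disjoint-orbit assumption twice to rule out ``always first'' and ``always last,'' deduces that a first-to-last transition $n_s \to n_s+1$ must occur, and finally shows that at such a transition the two-level comparability of Remark~\ref{r1} already forces $C_7^2 \leq \mathfrak{B}(b;\mathcal{I}^{n_s}) \leq 1-C_7^2$, contradicting the standing assumption. Your proposal instead isolates a \emph{propagation} step: the explicit numerical inequality $C_7 < 1 - C_7^2$ (which the paper never uses) shows that, under the contradiction hypothesis, the side on which $b$ lies can never switch. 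This turns the argument on its head --- rather than ``a transition exists, and there the bound holds,'' you prove ``no transition is possible, so an endpoint is pinned forever, so $b$ lies on the orbit of $a$.'' Your version is tighter and sidesteps the edge cases the paper must juggle separately (when $\mathcal{I}^n$ is itself an atom of $\textbf{D}_{n+1}$, requiring passage to $\textbf{D}_{n+2}$); the pinning lemma handles that case trivially since the leftmost subinterval is then $\mathcal{I}^n$ itself. One small remark: for the shrinking of $|\mathcal{I}^m|$ you could cite Remark~\ref{r1} directly (it gives $|I^{n+2}| \leq C_8|I^n|$ with $C_8 < 1$ unconditionally) rather than routing through Properties~\ref{inter nisbat va expon} and~\ref{proper3}, but your citation is also valid.
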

\begin{proof}
We argue by contradiction and suppose that there exists a natural number
$n_{0}\geq 1$ such that for all $n\geq n_{0}$ hold
\begin{equation}\label{eq13}
\mathfrak{B}(b; \mathcal{I}^{n})<C_{7}^{2}\,\,\,\,\,\,\,
\text{or}\,\,\,\,\,\,\,\, \mathfrak{B}(b; \mathcal{I}^{n})>1-C_{7}^{2}.
\end{equation}
This inequalities ensure that, if the interval
$\mathcal{I}^{n}$ partitioned into at least two intervals belonging to
$\textbf{D}_{n+1},$ then the point $b$
is always in one of the extremal intervals
($I^{n+1}_{f}$-first or $I^{n+1}_{l}$-last) among
the $\textbf{D}_{n+1}$ intervals that is contained $\mathcal{I}^{n}.$
Actually, if $b$ belongs to none of the two extremal intervals of
$\textbf{D}_{n+1}$ which are contained in $\mathcal{I}^{n},$ then the point
$b$ is separated from left edge of $\mathcal{I}^{n}$
by at least $|I^{n+1}_{f}|$ distance  and  from right edge of $\mathcal{I}^{n}$ by at least
$|I^{n+1}_{l}|$ distance. But, using property \ref{proper4},  we have gotten be the following
inequalities
\begin{equation}\label{eq14}
 C_{7}^{2} < C_{7}\leq \frac{|I^{n+1}_{f}|}{|\mathcal{I}^{n}|}\leq\mathfrak{B}(b; \mathcal{I}^{n})
 \leq 1-\frac{|I^{n+1}_{l}|}{|\mathcal{I}^{n}|} < 1-C_{7}\leq 1-C_{7}^{2}.
\end{equation}
If the interval $\mathcal{I}^{n}$ is member of partition $\textbf{D}_{n+1}$, then it is 	
surely partitioned into at least two intervals belonging to $\textbf{D}_{n+2}.$ In this case
the point $b$ also lies in one of the extremal
$I^{n+2}_{f}$ or $I^{n+2}_{l}$ intervals among
the $\textbf{D}_{n+2}$ intervals that is contained $\mathcal{I}^{n}.$
Otherwise using remark \ref{r1} we have gotten be the following
inequalities
\begin{equation}\label{eq15}
 C_{7}^{2} \leq \frac{|I^{n+2}_{f}|}{|\mathcal{I}^{n}|}\leq\mathfrak{B}(b; \mathcal{I}^{n})
 \leq 1-\frac{|I^{n+2}_{l}|}{|\mathcal{I}^{n}|} \leq 1-C_{7}^{2}.
\end{equation}
So, both inequalities (\ref{eq14}) and (\ref{eq15}) are contradiction
to  (\ref{eq13}).
Continue, the point $b$ cannot be indefinitely in the first interval
of the partition $\textbf{D}_{n+1}$ which is contained in
$\mathcal{I}^{n}.$ That is, for all positive integers $n',$ there
exists $n\geq n'$ such that $\mathcal{I}^{n+1}$ is not the first
intervals  of the partition
$\textbf{D}_{n+1}$ that are contained $\mathcal{I}^{n}.$
If not, since the length of intervals of partition $\textbf{D}_{n}$
 tends to zero when $n$ goes to infinity, the point
 $b$ would be arbitrary near the common left extremity
 of all  $\mathcal{I}^{n},$ for all $n\geq n'.$
 Therefore,  $b$ would equal this point which is the left
 end point of $\mathcal{I}^{n'}$ and hence an iterate of the
 point $a,$ which contradicts the hypothesis that the break points
 $a$ and $b$ of $f$  have disjoint orbits.
 In a like manner, we can show that
 the point $b$ cannot be indefinitely in the last interval
of the partition $\textbf{D}_{n+1}$ which is contained in
$\mathcal{I}^{n}.$

So, there exists a subsequence on $n_s \in \mathbb{N},$ such that the point $b$
is in the first of the $\textbf{D}_{n_s+1}$ intervals that are contained in $\mathcal{I}^{n_s}$ and in the last of the $\textbf{D}_{n_s+2}$ intervals that are contained $\mathcal{I}^{n_s+1}$. In this situation, the point $b$ is separated from left edge of $\mathcal{I}^{n_s}$ by at least $|I^{n_s+2}_{f}|$ distance  and  from right edge of $\mathcal{I}^{n_s}$ by at least $|I^{n_s+2}_{l}|$ distance. By remark \ref{r1} we get
\begin{equation}\label{eq16}
 C_{7}^{2} \leq \frac{|I^{n_s+2}_{f}|}{|\mathcal{I}^{n_s}|}\leq\mathfrak{B}(b; \mathcal{I}^{n_s})
 \leq 1-\frac{|I^{n_s+2}_{l}|}{|\mathcal{I}^{n_s}|} \leq 1-C_{7}^{2}.
\end{equation}
\end{proof}
\section{Universal bounds to the consecutive break points of $f^{q_{m}}$}
In this section we consider a P-homeomorphism $f$ with two break points $a,$ $b$ which is does not lie on the
same orbit. It is clear that the map $f^{q_{m}}$ has for break points $a_{j}=f^{-j}(a)$ and $b_{j}=f^{-j}(b)$ with $j=0,1,...,q_{m}-1.$
If we consider $m$-th dynamical partition $\textbf{D}_{m}=\textbf{D}_{m}(a, f)$ of the break point $a,$
then the break points $a_{j},$ $0\leq j< q_{m}$ of $f^{q_{m}}$ lies on the endpoints of intervals
$I^{m-1,m}_{j}$ of $\textbf{D}_{m}$ for every natural number $m.$
Eventually, we must study behavior of only the break points $b_{j},$ $0\leq j< q_{m}$ of $f^{q_{m}}.$
Thats why we have introduced the partition $\textbf{D}_{m}.$
  Now we consider $m$-th dynamical partition $\textbf{P}_{m}=\textbf{P}_{m}(a, f)$ of the break point $a.$
  The break points $a_{j},$ $0\leq j< q_{m}$ of $f^{q_{m}}$  can not be  the endpoints of intervals
of $\textbf{P}_{m}$ and the atoms of $\textbf{P}_{m}$ containing the points $a_{j}$ in its interior  denoted dy $\Delta^{m}(a_{j})$.
  Next theorem show that if for some $m=n_{s}\geq n_{0}$ the barycentric coefficient of the point
  $b$ in  $\mathcal{I}^{m}$ is universally bounded
then for such $m$ there exists the universal bounds between any two consecutive break points of $f^{q_{m}}$
i.e. any two consecutive break points of $f^{q_{m}}$ can not be very close to each other.
More precisely:

\begin{theo}\label{proposition 4.5} Let $f$ be a P-homeomorphism with break points $a$ and $b.$
 Suppose for some $m\geq n_{0}$ the barycentric coefficient of the point $b$ in $\mathcal{I}^{m}=\mathcal{I}^{m}(b)\in\textbf{D}_{m}$
 is universally bounded with constants $0<C_{7}^{2}<1-C_{7}^{2}<1,$ i.e.
 $C_{7}^{2}\leq\mathfrak{B}(b; \mathcal{I}^{m})\leq 1-C_{7}^{2}.$
 Then there exists a natural number $l=l(f)$ such that:
\begin{description}
  \item[(i)] for all  $0\leq j<q_{m}$ the interior of the atom $\Delta^{m+l}(a_{j})\in \textbf{P}_{m+l}$ contains only one break point of $f^{q_{m}}:$ the point $a_{j},$ hence $Df^{q_{m}}$ is continuous on each component of $\Delta^{m+l}(a_{j})\setminus \{a_{j}\};$
  \item[(ii)]for all  $0\leq j<q_{m}$ the barycentric coefficients of the points $a_{j}$ in $\Delta^{m+l}(a_{j})$ is universally bounded  with constants $0<C_{6}<1-C_{6}<1,$ i.e.
      $$
      C_{6}\leq\mathfrak{B}(a_{j}; \Delta^{m+l}(a_{j}))\leq 1-C_{6}.
      $$
\end{description}
\end{theo}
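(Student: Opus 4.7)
The plan is to establish part $(i)$ by combining a lower bound on how far each $b_k = f^{-k}(b)$ can sit from the $a_{j}$'s with the exponentially small diameter of atoms of $\textbf{P}_{m+l}$ provided by Remark \ref{inter nisbat va expon rem}, and then choose $l = l(f)$ large enough so that no $\textbf{P}_{m+l}$-atom around an $a_{j}$ can contain a second break point of $f^{q_m}$. Part $(ii)$ will then be read off from the structural description of how $\textbf{P}_{m+l}$ sits inside $\textbf{D}_{m+l}$ at the point $a_j$, together with inequalities (\ref{eq5}) of Property \ref{proper3}.

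First I would translate the hypothesis $\mathfrak{B}(b;\mathcal{I}^m)\in[C_7^2,\,1-C_7^2]$ into a uniform barycentric bound for each $b_k$. Since $\mathcal{I}^m$ lies inside a generator interval $\Delta^{m-1}_{i_0}$, the generalized Finzi inequality (property $(a)$, together with Remark \ref{remark1} for $f^{-1}$) gives distortion bounded by $e^{\pm v}$ for $f^{-k}$ on $\mathcal{I}^m$ whenever $0\leq k<q_m$, so $\mathfrak{B}(b_k;\,f^{-k}(\mathcal{I}^m))\in[e^{-v}C_7^2,\,1-e^{-v}C_7^2]$. Next I would check that no $a_{j}$ can lie in the interior of $f^{-k}(\mathcal{I}^m)$: if it did, $f^k(a_{j})=a_{j-k}$ would be interior to $\mathcal{I}^m$, yet $a_{j-k}=f^{k-j}(a)$ is a $\textbf{D}_m$-endpoint for every pair $j,k\in[0,q_m)$ (by the index count used to define $\textbf{D}_m$), and the interior of a $\textbf{D}_m$-atom contains no such endpoint. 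Consequently every $a_j$ lies outside or on the boundary of $f^{-k}(\mathcal{I}^m)$, which together with the barycentric bound and the comparability of $\textbf{P}_m$- and $\textbf{D}_m$-atoms from Properties \ref{proper1} and \ref{proper3} yields a universal constant $C_9=C_9(\mathcal{Q},f)>0$ such that $d(a_j,b_k)\geq C_9\,|\mathcal{I}^m(b_k)|$ for every admissible $(j,k)$.

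With this uniform separation in hand, Remark \ref{inter nisbat va expon rem} bounds $|\Delta^{m+l}|\leq(1+e^v)e^{3v}\lambda^l|\Delta^m|$ for any nested pair of atoms, and Property \ref{proper1} lets me compare $|\Delta^m(a_j)|$ with the containing $\textbf{D}_m$-atom of each nearby $b_k$. Choosing $l$ so that $(1+e^v)e^{3v}\lambda^l$ strictly undercuts $C_9$ after accounting for these comparability constants forces $|\Delta^{m+l}(a_j)|$ to be smaller than $d(a_j,b_k)$ for every $k$, and also smaller than the separation between any two distinct $a_{j}$'s (which are all $\textbf{D}_m$-endpoints and hence separated on $S^1$ by at least a full $\textbf{D}_m$-atom). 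This finishes part $(i)$, and in particular $Df^{q_m}$ is continuous on each component of $\Delta^{m+l}(a_j)\setminus\{a_j\}$.

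For part $(ii)$, I would observe that $a_j=f^{-j}(a)$ with $0<j<q_m$ is a $\textbf{D}_{m+l}$-endpoint (since $\textbf{D}_{m+l}$ contains every negative iterate of $a$ down to $\xi_{-q_{m+l}}$) but is never a $\textbf{P}_{m+l}$-endpoint, whose endpoints are only positive iterates of $a$. The atom of $\textbf{P}_{m+l}$ whose interior contains $a_j$ must therefore be of type $\Delta^{m+l-1}_{i'}$ with $i'=q_{m+l}-j$, and by relation (\ref{eq4}) it decomposes in $\textbf{D}_{m+l}$ exactly at $a_j$ into $I^{m+l}_{i'}\cup I^{m+l-1,m+l}_{i'}$. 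The inequalities (\ref{eq5}) of Property \ref{proper3} then give that each sub-interval occupies at least a $C_6$-fraction of $\Delta^{m+l-1}_{i'}$, yielding $C_6\leq\mathfrak{B}(a_j;\Delta^{m+l}(a_j))\leq 1-C_6$. The main obstacle is the uniform propagation of the barycentric bound in the first step: one must simultaneously exploit Finzi control on the full iterate $f^{-k}$ and the combinatorial fact that $j-k$ always lands in the $\textbf{D}_m$-endpoint index range, in order to prevent any $a_j$ from sitting in the interior of $f^{-k}(\mathcal{I}^m)$ and thereby collapsing the separation estimate.
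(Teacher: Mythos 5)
Your plan reproduces the paper's two main steps, namely establish a universal lower bound on the separation between $a_j$ and the other break points of $f^{q_m}$ using the barycentric hypothesis and the Finzi inequality, and then pick $l$ so that the exponential shrinking from Remark~\ref{inter nisbat va expon rem} undercuts this separation. Where you differ from the paper is in \emph{how} the separation is obtained. The paper proves it by an exhaustive case analysis: depending on whether $\mathcal{I}^m(b)$ is an $I^m_{k_0}$ with $k_0<q_m$, with $q_m\le k_0<q_m+q_{m-1}$, or an $I^{m-1,m}_{j_0}$ in various sub-ranges, it writes down explicitly which pair of break points of $f^{q_m}$ lands in each $\Delta^{m-1}_i$ (relations (\ref{eq17a}), (\ref{eq19a})--(\ref{eq20b})) and then estimates each of these distances. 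You replace all of that by a single combinatorial observation: since $j,k\in[0,q_m)$, the index $k-j$ always lies in $(-q_m,q_m)\subset[-q_m,q_m+q_{m-1})$, so $f^{k-j}(a)$ is always a $\textbf{D}_m$-endpoint, hence $a_j$ can never be interior to $f^{-k}(\mathcal{I}^m)$. This cleanly abstracts the paper's enumeration and is a genuine simplification of the exposition. Part~(ii) of your argument (identify $a_j$ as the unique $\textbf{D}_{m+l}$-endpoint interior to its $\textbf{P}_{m+l}$-atom, decompose via relation (\ref{eq4}), and apply (\ref{eq5})) is essentially verbatim the paper's.

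There is, however, one place where you slide past a real difficulty. After establishing that $a_j$ is not interior to $f^{-k}(\mathcal{I}^m)$ and that $\mathfrak{B}(b_k;f^{-k}(\mathcal{I}^m))\in[e^{-v}C_7^2,\,1-e^{-v}C_7^2]$, you state the conclusion as $d(a_j,b_k)\ge C_9\,|\mathcal{I}^m(b_k)|$, silently replacing $|f^{-k}(\mathcal{I}^m)|$ by $|\mathcal{I}^m(b_k)|$ and invoking Properties~\ref{proper1} and \ref{proper3}. But $f^{-k}(\mathcal{I}^m)$ is a $\textbf{D}_m$-atom (and hence equal to $\mathcal{I}^m(b_k)$) only when the translated index stays in the admissible range, i.e.\ only when $\mathcal{I}^m=I^m_i$ with $i\ge k$, or $\mathcal{I}^m=I^{m-1,n}_i$ with $i\ge k$. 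Outside that range $f^{-k}(\mathcal{I}^m)$ has one endpoint of the form $f^{s}(a)$ with $s<-q_m$, which is \emph{not} a $\textbf{D}_m$-vertex, and then the identification with $\mathcal{I}^m(b_k)$ and the direct appeal to Properties~\ref{proper1}, \ref{proper3} do not apply. The paper deals with precisely this by writing $f^{-k}=f^{-q_m}\circ f^{q_m-k}$, recognizing $f^{q_m-k}(\mathcal{I}^m)$ as a genuine $\textbf{D}_m$-atom, and absorbing the extra $f^{-q_m}$ with a Denjoy bound $e^{\pm v}$ (this is exactly why factors like $e^{-2v},e^{-3v},e^{-4v}$ appear in the explicit separation constants). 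You need an analogous factoring-plus-Denjoy step to make the comparison between $|f^{-k}(\mathcal{I}^m)|$ and $|\Delta^{m-1}(a_j)|$ rigorous; once that is inserted, the rest of your plan for part~(i) goes through with universal constants as claimed.
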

\begin{proof}
To prove first the statement of this theorem  first we find consecutive break points of $f^{q_{m}}$
and then we estimate  distance between consecutive break points of $f^{q_{m}}.$
W.l.o.g. choose $m$ to be odd. Then $m$-th and $m-1$-th generator intervals are the following form:
 $\Delta^{m}_{0}(a)=[f^{q_{m}}(a), a]$ and $\Delta^{m-1}_{0}(a)=[a, f^{q_{m-1}}(a)].$
By definition of dynamical partition $\textbf{D}_{m}=\textbf{D}_{m}(a, f)$
 the interval $\mathcal{I}^{m}(b)\in \textbf{D}_{m}$ is either
 an interval  $I^{m}_{k_0}=f^{k_{0}}[a, f^{-q_{m}}(a)]$  for some $0\leq k_{0}<q_{m}+q_{m-1}$ or
an interval  $I^{m-1,m}_{j_{0}}=f^{j_{0}}[f^{-q_{m}}(a),  f^{q_{m-1}}(a)]$ for some $0\leq j_{0}<q_{m}.$
First we suppose $\mathcal{I}^{m}(b)=f^{k_{0}}[a, f^{-q_{m}}(a)].$
It is easy to see that if $0\leq k_{0}<q_{m}$ then
the consecutive break points of $f^{q_{m}}$ are
following:
\begin{eqnarray}\label{eq17a}
\left\{
  \begin{array}{ll}
    f^{-k_{0}+i}(b), f^{-q_{m}+i}(a)\in\Delta^{m-1}_{i}, & \hbox{$1\leq i\leq k_{0}$;} \\
    f^{-q_{m}+i}(a), f^{-q_{m}+i-k_{0}}(b)\in\Delta^{m-1}_{i}, & \hbox{$k_{0}< i\leq q_{m}$.}
  \end{array}
\right.
\end{eqnarray}
From relations (\ref{eq17a}) implies that if $1\leq i\leq k_{0}$ then between break point $f^{-q_{m}+i}(a)$ of $f^{q_{m}}$
 and right endpoint $f^{q_{m-1}+j}(a)$ of  $\Delta^{m-1}_{i}$ can not lies another break points of  $f^{q_{m}}.$
 Similarly, if $k_{0}< i\leq q_{m}$ then between break point $f^{-q_{m}+i}(a)$ of $f^{q_{m}}$
 and left endpoint $f^{j}(a)$ of  $\Delta^{m-1}_{i}$ can not lies another break points of  $f^{q_{m}}.$
 By definition of dynamical partition $\textbf{D}_{m}$ we have
 $I^{m-1,m}_{i}=f^{i}[f^{-q_{m}}(a), f^{q_{m-1}}(a)]$ and $I^{m}_{i}=f^{i}[a, f^{-q_{m}}(a)].$
 Using inequalities (\ref{eq5}) we get $|I^{m-1,m}_{i}|\geq C_{6}|\Delta^{m-1}_{i}|,$ $1\leq i\leq k_{0}$ and
 $|I^{m}_{i}|\geq (1+e^{v})^{-1}|\Delta^{m-1}_{i}|,$ $k_{0}< i\leq q_{m}.$
Now we estimate the distance between this consecutive break points of $f^{q_{m}}.$
By assumption of theorem \ref{proposition 4.5}
\begin{eqnarray}\label{eq17}
 C_{7}^{2}\leq \mathfrak{B}(b; \mathcal{I}^{m})=\frac{|[f^{k_{0}}(a), b]|}{|[f^{k_{0}}(a), f^{-q_{m}+k_{0}}(a)]|}\leq 1-C_{7}^{2},
\end{eqnarray}
from this imply
\begin{eqnarray}\label{eq18}
 C_{7}^{2}\leq 1-\mathfrak{B}(b; \mathcal{I}^{m})=\frac{|[b, f^{-q_{m}+k_{0}}(a)]|}{|[f^{k_{0}}(a), f^{-q_{m}+k_{0}}(a)]|}\leq 1-C_{7}^{2}.
\end{eqnarray}
So, by the Denjoy estimate, we get
\begin{eqnarray}\label{eq19}
 e^{-2v}C_{7}^{2}\leq \frac{|f^{-q_{m}}[f^{k_{0}}(a), b]|}{|f^{-q_{m}}[f^{k_{0}}(a), f^{-q_{m}+k_{0}}(a)]|}\leq e^{2v}(1-C_{7}^{2}).
\end{eqnarray}
Using Finzi inequality (generalized Finzi inequality) to the inequalities (\ref{eq18}) and (\ref{eq19})
we can show that the distance  between consecutive
break points $f^{-k_{0}+i}(b)$ and $f^{-q_{m}+i}(a),$ $1\leq i \leq k_{0}$ of $f^{q_{m}}$ greater then
$C_{7}^{2}e^{-v}|f^{i}[a, f^{-q_{m}}(a)]|=C_{7}^{2}e^{-v}|I^{m}_{i}|.$ Similarly, the  distance  between
consecutive break points $f^{-q_{m}+i}(a)$ and $f^{-q_{m}+i-k_{0}}(b),$ $k_{0}< i\leq q_{m}$ of $f^{q_{m}}$
greater then $C_{7}^{2}e^{-2v}|f^{-i}[f^{q_{m}}(a), f^{-2q_{m}}(a)]|=C_{7}^{2}e^{-2v}|f^{-q_{m}}(I^{m}_{i})|.$
Using inequalities (\ref{eq5}) together with Denjoy inequalities it is easy to see that
the distance between  consecutive break points $f^{-k_{0}+i}(b)$ and
$f^{-q_{m}+i}(a),$ $1\leq i\leq k_{0}$ of $f^{q_{m}}$ greater then
$C_{7}^{2}C_{6}e^{-v}|\Delta^{m-1}_{i}|$ and the distance  between
 consecutive break points $f^{-q_{m}+i}(a)$ and
$f^{-q_{m}+i-k_{0}}(b),$ $k_{0}< i\leq q_{m}$ of $f^{q_{m}}$ greater then
$C_{7}^{2}C_{6}e^{-3v}|\Delta^{m-1}_{i}|.$

Now we consider the case  $q_{m}\leq k_{0}<q_{m}+q_{m-1}.$
In this case the consecutive break points of $f^{q_{m}}$ are
following:
\begin{eqnarray}\label{eq19a}
\left\{
  \begin{array}{ll}
    f^{-(k_{0}-q_{m})+i}(b), f^{-q_{m}+i}(a)\in\Delta^{m}_{i}\cup\Delta^{m-1}_{i}, & \hbox{$1\leq i \leq k_{0}-q_{m}$;} \\
    f^{-k_{0}+i}(b), f^{-q_{m}+i}(a)\in\Delta^{m-1}_{i}, & \hbox{$k_{0}-q_{m}< i\leq q_{m}$.}
  \end{array}
\right.
\end{eqnarray}
The same manner as above we can show that the distance  between
of the break points $ f^{-(k_{0}-q_{m})+i}(b)$ and $f^{-q_{m}+i}(a),$ $1\leq i\leq k_{0}$ of $f^{q_{m}}$ greater then
$(C_{7}^{2}+e^{-v})C_{2}e^{-v}|\Delta^{m-1}_{i}|$ and the  distance  between of the
break points $f^{-k_{0}+i}(b)$ and $f^{-q_{m}+i}(a)\,$ $k_{0}-q_{m}< i\leq q_{m}$ of $f^{q_{m}}$ greater then
$C_{7}^{2}C_{2}e^{-3v}|\Delta^{m-1}_{i}|.$
Moreover, for all  $1\leq i\leq q_{m}$  between break point $f^{-q_{m}+i}(a)$ of $f^{q_{m}}$
 and right endpoint $f^{q_{m-1}+j}(a)$ of  $\Delta^{m-1}_{i}$ can not lies another break points of  $f^{q_{m}}$
 and $|I^{m-1,m}_{i}|\geq C_{6}|\Delta^{m-1}_{i}|$ for all $1\leq i\leq q_{m}.$

Now we suppose $\mathcal{I}^{m}(b)=f^{j_{0}}[f^{-q_{m}}(a),  f^{q_{m-1}}(a)]$ for some $0\leq j_{0}<q_{m}.$
In this case to  find the  consecutive break points of $f^{q_{m}}$ we consider two cases:
$b \in f^{j_{0}}[f^{-q_{m}}(a),  f^{q_{m}+q_{m-1}}(a)]\subset \mathcal{I}^{m}(b)$ and $b \in f^{j_{0}}[f^{q_{m}+q_{m+1}}(a),  f^{q_{m-1}}(a)]\subset \mathcal{I}^{m}(b).$ If $b \in f^{j_{0}}[f^{-q_{m}}(a),  f^{q_{m}+q_{m-1}}(a)]$ then consecutive break points of $f^{q_{m}}$
are following:
\begin{eqnarray}\label{eq19b}
\left\{
  \begin{array}{ll}
    f^{-q_{m}+i}(a), f^{-j_{0}+i}(b) \in\Delta^{m-1}_{i}, & \hbox{$1\leq i\leq j_{0}$;} \\
    f^{-q_{m}+i}(a), f^{-q_{m}+i-j_{0}}(b)\in\Delta^{m-1}_{i}, & \hbox{$j_{0}< i\leq q_{m}$.}
  \end{array}
\right.
\end{eqnarray}
It is clear that, for all $1\leq i\leq q_{m}$  between break point $f^{-q_{m}+i}(a)$ of $f^{q_{m}}$
 and left endpoint $f^{j}(a)$ of  $\Delta^{m-1}_{i}$ can not lies another break points of  $f^{q_{m}}$
 and  $|I^{m}_{i}|\geq (1+e^{v})^{-1}|\Delta^{m-1}_{i}|.$
 Now we estimate the distance between consecutive
 break points of $f^{q_{m}}.$
By assumption of theorem \ref{proposition 4.5} the barycentric coefficient
of the point $b$ in $\mathcal{I}^{m}$ is universally bounded i.e.
\begin{eqnarray}\label{eq20}
 C_{7}^{2}\leq \mathfrak{B}(b; \mathcal{I}^{m})=\frac{|[f^{-q_{m}+j_{0}}(a), b]|}{|[f^{-q_{m}+j_{0}}(a), f^{q_{m-1}+j_{0}}(a)]|}\leq 1-C_{7}^{2}.
\end{eqnarray}
By above notation $I^{m-1,m}_{j_{0}}=[f^{-q_{m}+j_{0}}(a), f^{q_{m-1}+j_{0}}(a)].$
Using inequalities (\ref{eq5}) and (\ref{eq20}) together with  Finzi inequality  we get
$C_{7}^{2}e^{-v}(1+e^{v})^{-1}|\Delta^{m-1}_{i}|\leq |f^{-q_{m}+i}(a), f^{-j_{0}+i}(b)|$ for all $1\leq i\leq j_{0}.$
The same manner as above we can show that
$C_{7}^{2}(1+e^{v})^{-1}e^{-v}|\Delta^{m-1}_{i}|\leq |f^{-q_{m}+i}(a), f^{-j_{0}+i}(b)|<|f^{-q_{m}+i}(a), f^{-q_{m}+i-j_{0}}(b)|$
 for all $j_{0}\leq i\leq q_{m}.$

 Now we consider the case
 $b \in f^{j_{0}}[f^{q_{m}+q_{m-1}}(a),  f^{q_{m-1}}(a)]\subset \mathcal{I}^{m}(b).$
 In this case if $0\leq j_{0}<q_{m}-q_{m-1}$ then consecutive break points of $f^{q_{m}}$ are
following:
\begin{eqnarray}\label{eq20a}
\left\{
  \begin{array}{ll}
    f^{-j_{0}-q_{m-1}+i}(b), f^{-q_{m}+i}(a)\in\Delta^{m-1}_{i}\cup\Delta^{m}_{i}, & \hbox{$1\leq i\leq j_{0}+q_{m-1}$;} \\
    f^{-q_{m}-j_{0}-q_{m-1}+i}(b), f^{-q_{m}+i}(a)\in\Delta^{m-1}_{i}, & \hbox{$j_{0}+q_{m-1}< i\leq q_{m}$.}
  \end{array}
\right.
\end{eqnarray}
Similarly we can show that
  $C^{2}_{7}(1+e^{v})^{-1}e^{-3v}|\Delta^{m-1}_{i}|\leq|f^{-j_{0}-q_{m-1}+i}(b), f^{-q_{m}+i}(a)|$ for all
  $1\leq i\leq j_{0}+q_{m-1}$ and $C^{2}_{7}(1+e^{v})^{-1}e^{-4v}|\Delta^{m-1}_{i}|\leq|f^{-q_{m}-j_{0}-q_{m-1}+i}(b), f^{-q_{m}+i}(a)|$
 for all $j_{0}+q_{m-1}< i\leq q_{m}.$
If $q_{m}-q_{m-1}\leq j_{0}<q_{m}$ then consecutive break points of $f^{q_{m}}$ are
following:
\begin{eqnarray}\label{eq20b}
\left\{
  \begin{array}{ll}
    f^{-q_{m}+i}(a), f^{-j_{0}+i}(b)\in\Delta^{m-1}_{i}, & \hbox{$1\leq i \leq j_{0}$;} \\
    f^{-q_{m}+i}(a), f^{-q_{m}-j_{0}+i}(b)\in\Delta^{m-1}_{i}\cup\Delta^{m}_{q_{m-1}-q_{m}+i}, & \hbox{$j_{0}< i\leq q_{m}$.}
  \end{array}
\right.
\end{eqnarray}
In this case
 $C^{2}_{7}(1+e^{v})^{-1}e^{-v}|\Delta^{m-1}_{i}|\leq |f^{-q_{m}+i}(a), f^{-j_{0}+i}(b)|$ for all $1\leq i \leq j_{0}$
 and $C^{2}_{7}(1+e^{v})^{-1}e^{-v}|\Delta^{m-1}_{i}|\leq|f^{-q_{m}+i}(a), f^{-q_{m}-j_{0}+i}(b)|$ for all $j_{0}< i\leq q_{m}$.
  Using above concepts it is easy to see that
 distance between the consecutive break points of $f^{q_{m}}$ greater then $\min \{C^{2}_{7}(1+e^{v})^{-1}e^{-4v}, C^{2}_{7}C_{2}e^{-3v}\}|\Delta^{m-1}_{i}|$ for all
 $0\leq i\leq q_{m}.$ By remark \ref{inter nisbat va expon rem} there exist such
  $l\in \mathbb{N}$ that hold this $|\Delta^{m+l}(a_{j})|\leq (1+e^{v})e^{3v}\lambda^{l}|\Delta^{m}(a_{j})|$ inequality. If we take a natural
   number $l$ such that $2(1+e^{v})e^{3v}\lambda^{l}<\min \{C^{2}_{7}(1+e^{v})^{-1}e^{-4v}, C^{2}_{7}C_{2}e^{-3v}\}$ then the interior of the atom $\Delta^{m+l}(a_{j})$ contains only one break point of $f^{q_{m}},$ because distance between the nearest break points of $f^{q_{m}}$ greater then $|\Delta^{m+l}(a_{j})|.$

Now we prove the second assertion of theorem \ref{proposition 4.5}.
Using property of dynamical partition $\textbf{P}_{m+l}$  for
every $0< j\leq q_{m}$ we can written explicit form of the intervals $\Delta^{m+l}(a_{j})$
as the following form:
 \begin{description}
  \item[-] $\Delta^{m+l}(a_{j})=[f^{q_{m+l}-j}(a), \,\, f^{q_{m+l}+q_{m+l-1}-j}(a)],$ \,\,\  if $l$ is even,
  \item[-]$\Delta^{m+l}(a_{j})=[f^{q_{m+l}+q_{m+l-1}-j}(a), \,\, f^{q_{m+l}-j}(a)],$ \,\,\  if $l$ is odd.
\end{description}
If $l$ is even then the barycentric coefficient of point $a_{j}$
in $\Delta^{m+l}(a_{j})$ is equal to the following ratio:
$$
\mathfrak{B}(a_{j}; \Delta^{m+l}(a_{j}))=\frac{|[f^{q_{m+l}-j}(a), a_{j}]|}
{|[f^{q_{m+l}-j}(a), f^{q_{m+l}+q_{m+l-1}-j}(a)]|}=
\frac{|[f^{q_{m+l}-j}(a), f^{-j}(a)]|}
{|[f^{q_{m+l}-j}(a), f^{q_{m+l}+q_{m+l-1}-j}(a)]|}
$$
In the case $l$ is odd then the barycentric coefficient of point $a_{j}$ is equal to the
following ratio:
$$
\mathfrak{B}(a_{j}; \Delta^{m+l}(a_{j}))=\frac{|[f^{q_{m+l}+q_{m+l-1}-j}(a), a_{j}]|}
{|[f^{q_{m+l}+q_{m+l-1}-j}(a), f^{q_{m+l}-j}(a)]|}=
\frac{|[f^{q_{m+l}+q_{m+l-1}-j}(a), f^{-j}(a)]|}
{|[f^{q_{m+l}+q_{m+l-1}-j}(a), f^{q_{m+l}-j}(a)]|}.
$$
Let us take change of variable $z=f^{q_{m+l}-j}(a),$ then
$$
\mathfrak{B}(f^{-q_{m+l}}(z); \Delta^{m+l}(f^{-q_{m+l}}(z)))=
\frac{|[z, f^{-q_{m+l}}(z)]|}
{|[z, f^{q_{m+l-1}}(z)]|}=\frac{|I^{n+l}_{0}(z)|}{|\Delta^{n+l-1}_{0}(z)|},
$$
if $l$ is even and if $l$ is odd, then
$$
\mathfrak{B}(f^{-q_{m+l}}(z); \Delta^{m+l}(f^{-q_{m+l}}(z)))=
\frac{|I^{n+l-1, n+l}_{0}(z)|}{|\Delta^{n+l-1}_{0}(z)|}.
$$
Using inequalities (\ref{eq5}) it is easy to see that the
following inequalities hold for both cases of $l$
$$
C_{6}\leq\mathfrak{B}(f^{-q_{m+l}}(z); \Delta^{m+l}(f^{-q_{m+l}}(z)))\leq 1-C_{6}.
$$
\end{proof}

\section {Estimates for differences of $\log Df^{q_{n}}$}
Let  $f_{i},$ $i=1,2$ be circle homeomorphisms with two break points $ a_{i},b_{i}$
 satisfying the conditions $(1)-(5)$ of theorem \ref{ADM1}. We introduce the following
function on the circle
$$
F_{n}(x)=\frac{Df_{2}^{q_n}(h(x))}{Df_{1}^{q_n}(x)}=\frac{Df_{2}(h(x))\cdot Df_{2}(f_{2}(h(x)))\cdot\cdot\cdot Df_{2}(f^{q_{n}-1}_{2}(h(x)))}
{Df_{1}(x)\cdot Df_{1}(f_{1}(x))\cdot\cdot\cdot Df_{1}(f^{q_{n}-1}_{1}(x))}.
$$
The map $F_{n}$ has for jump points (i.e. the map $F_{n}$ has jump)
$a^{1}_{k}=f_{1}^{-k}(a_{1}),$  $b^{1}_{k}=f_{1}^{-k}(b_{1})$
and $a^{2}_{k}=h^{-1}(f_{2}^{-k}(a_{2})),$  $b^{2}_{k}=h^{-1}(f_{2}^{-k}(b_{2}))$ with
$0\leq k<q_{n}.$ To prove the theorem \ref{ADM1} we will consider the following two cases:
$$
\textbf{either} \,\,\, \mu_{1}[a_{1}, b_{1}]=\mu_{2}[a_{2}, b_{2}]
\,\, \text{and}\,\,\
 \textbf{or} \,\,\ \mu_{1}[a_{1}, b_{1}]\neq \mu_{2}[a_{2}, b_{2}],
 \,\,\,
 $$
 where $ \mu_{i}$ is an invariant probability measure of $f_{i}, i=1,2.$
 Consider first the case $\mu_{1}[a_{1}, b_{1}]=\mu_{2}[a_{2}, b_{2}]$. Since conjugation map $h$ is unique up to additive constant we choose $h$ such that $h(a_{1})=a_{2},$ then by assumption $\mu_{1}[a_{1}, b_{1}]=\mu_{2}[a_{2}, b_{2}]$ implies that $h(b_{1})=b_{2}.$ Using this we get $h(f_{1}^{-k}(a_{1}))=f_{2}^{-k}(a_{2})$ and $h(f_{1}^{-k}(b_{1}))=f_{2}^{-k}(b_{2})$  for all $0\leq k<q_{n}.$
 It is easy to see the jump  points of $F_{n}$ are $a^{1}_{k}=f_{1}^{-k}(a_{1}),$  $b^{1}_{k}=f_{1}^{-k}(b_{1})$ $0\leq k<q_{n}$
 i.e. the jump points of $F_{n}$
 composed  only of the break points of  $f_{1}^{q_{n}}.$
The jumps of $F_{n}$ at these
points are following:
 $$
 \sigma_{F_{n}}(a^{1}_{k})=\frac{\sigma_{f^{q_{n}}_{2}}(h(a^{1}_{k}))}
 {\sigma_{f^{q_{n}}_{1}}(a^{1}_{k})}=\frac{\sigma_{f_{2}}(h(a_{1}))}
 {\sigma_{f_{1}}(a_{1})}=\frac{\sigma_{f_{2}}(a_{2})}
 {\sigma_{f_{1}}(a_{1})},
  $$
$$
 \sigma_{F_{n}}(b^{1}_{k})=\frac{\sigma_{f^{q_{n}}_{2}}(h(b^{1}_{k}))}
 {\sigma_{f^{q_{n}}_{1}}(b^{1}_{k})}=\frac{\sigma_{f_{2}}(h(b_{1}))}
 {\sigma_{f_{1}}(b_{1})}=\frac{\sigma_{f_{2}}(b_{2})}
 {\sigma_{f_{1}}(b_{1})},
  $$
and by assumption theorem \ref{ADM1} implies that
$
\sigma_{F_{n}}(a^{1}_{k})\neq \sigma_{F_{n}}(b^{1}_{k}),$ and
$
\sigma_{F_{n}}(a^{1}_{k})\times  \sigma_{F_{n}}(b^{1}_{k})=1$ for all $0\leq k<q_{n}.$
Denote by $3\delta_{0}=|\log \sigma_{f_{2}}(a_{2})-\log \sigma_{f_{1}}(a_{1})|>0.$
  Apply theorem \ref{proposition 4.5} to the function $f_{1}$ we can find subsequence
   $n_{s}=n_{s}(f_{1})\in \mathbb{N}$ such that break points of  $f_{1}^{q_{n_{s}}}$  far from each other.
  Let  $l=l(f_{1})$ be the natural number which is defined in theorem \ref{proposition 4.5}. The following proposition is formulated for a suitable subsequence $n_{s}\in \mathbb{N}$ and natural number $l.$
\begin{prop}\label{proposition 4.6}
Assume the homeomorphisms  $f_{i}, i=1,2 $ satisfy the conditions of
 Theorem \ref{ADM1}.
  Then there exists a natural number $l_{0}=l_{0}(f_{1}, f_{2})$ such that $l_{0}\geq l$ and  for all $0\leq k < q_{n_{s}},$  on one of the two connected components of
$\Delta_{f_{1}}^{n_{s}+l_{0}}(a^{1}_{k})\backslash\{a^{1}_{k}\},$
the following inequality holds: $$|\log Df^{q_{n_{s}}}_{2}(h(x))- \log Df^{q_{n_{s}}}_{1}(x)|\geq
\delta_{0}.$$
\end{prop}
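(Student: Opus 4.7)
The plan is to exploit the jump of $\log F_{n_s}:=\log Df^{q_{n_s}}_2\circ h-\log Df^{q_{n_s}}_1$ at $a^1_k$, whose modulus equals $3\delta_0$, and to dominate the continuous variation of $\log F_{n_s}$ on each of the two sides of $a^1_k$ by $\delta_0/2$ by means of the universal estimates of Lemma \ref{universal estimates}. Throughout the sketch we fix the case $\mu_1[a_1,b_1]=\mu_2[a_2,b_2]$ that is in force at this point of the paper.

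First, apply Theorem \ref{proposition 4.5} to $f_1$ with $m=n_s$: for the natural number $l=l(f_1)$ produced by that theorem, the interior of the atom $\Delta_{f_1}^{n_s+l}(a^1_k)\in\textbf{P}_{n_s+l}(a_1,f_1)$ contains exactly one break point of $f_1^{q_{n_s}}$, namely $a^1_k$. In the present case the conjugacy satisfies $h(a_1)=a_2,\ h(b_1)=b_2$, hence $h(a^1_k)=a^2_k$ and $h(b^1_k)=b^2_k$, and $h$ maps the dynamical partition $\textbf{P}_{n_s+l}(a_1,f_1)$ bijectively onto $\textbf{P}_{n_s+l}(a_2,f_2)$. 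It follows that $h\bigl(\Delta_{f_1}^{n_s+l}(a^1_k)\bigr)=\Delta_{f_2}^{n_s+l}(a^2_k)$ and that the only break point of $f_2^{q_{n_s}}$ in the interior of this image atom is $a^2_k$.

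Second, pick $l_0\geq l$. Each of the two components of $\Delta_{f_1}^{n_s+l_0}(a^1_k)\setminus\{a^1_k\}$ is then a continuity interval of $Df_1^{q_{n_s}}$, and its $h$-image is a continuity interval of $Df_2^{q_{n_s}}$. Applying Lemma \ref{universal estimates} on each side separately to $f_1$ and to $f_2$ (for the latter, transported back by $h$, which is a bijection between the two atoms) yields an oscillation bound
\[
\mathrm{osc}(\log F_{n_s})\leq C_5(f_1)\lambda_1^{l_0/q}+C_5(f_2)\lambda_2^{l_0/q}=:\varepsilon(l_0)
\]
on each component, where all constants are universal in $n_s$ and $k$. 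Since $\lambda_i<1$, we can then fix $l_0=l_0(f_1,f_2)\geq l$ so large that $\varepsilon(l_0)\leq\delta_0/2$. Finally, writing $A$ and $B$ for the one-sided limits of $\log F_{n_s}$ at $a^1_k$, the jump identity $\log\sigma_{F_{n_s}}(a^1_k)=\log\bigl(\sigma_{f_2}(a_2)/\sigma_{f_1}(a_1)\bigr)$ gives $|A-B|=3\delta_0$, so $\max\{|A|,|B|\}\geq 3\delta_0/2$; on the side where this larger one-sided limit is attained, $|\log F_{n_s}(x)|\geq 3\delta_0/2-\varepsilon(l_0)\geq \delta_0$ for every $x$, as required.

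The main technical obstacle is a small book-keeping issue in the second step: Lemma \ref{universal estimates} is phrased in terms of generator intervals $\Delta_0^{n+l}(\xi)$, whereas $\Delta_{f_1}^{n_s+l_0}(a^1_k)$ is an atom of $\textbf{P}_{n_s+l_0}$. One fixes this by taking $\xi$ at the appropriate endpoint of the component and observing that the component sits inside a generator interval of index $n_s+l_0-O(1)$, which at worst rescales $\varepsilon(l_0)$ by a universal factor $\lambda^{-O(1)/q}$, and is absorbed into the choice of $l_0$. The rest of the argument is a direct combination of Theorem \ref{proposition 4.5} with the universal estimate, and the conclusion is uniform in $k\in\{0,\ldots,q_{n_s}-1\}$ precisely because all constants entering $\varepsilon(l_0)$ are universal.
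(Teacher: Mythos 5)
Your proposal is correct and follows essentially the same route as the paper: apply Theorem \ref{proposition 4.5} to isolate $a^1_k$ in an atom, use Lemma \ref{universal estimates} for $f_1$ and $f_2$ to bound the oscillation of $\log F_{n_s}$ on each side by $C_5(f_1)\lambda_1^{l_0/q}+C_5(f_2)\lambda_2^{l_0/q}\leq\delta_0/2$, and then exploit the jump $|\log\sigma_{F_{n_s}}(a^1_k)|=3\delta_0$. The only cosmetic difference is that you phrase the endgame as "the larger one-sided limit is at least $3\delta_0/2$," whereas the paper assumes the left component fails and derives $|\log F_{n_s}(a^1_k+0)|\geq 3\delta_0/2$, but these are equivalent.
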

\begin{proof}
Let us take a positive integer $\overline{l}$ such that $C_{5}(f_{1})\lambda_{1}^{\overline{l}/q}+C_{5}(f_{2})\lambda_{2}^{\overline{l}/q}\leq \frac{\delta_{0}}{2},$ where $C_{5}(f_{i})$ and $\lambda_{i}, i=1,2$  appropriate constants of $f_{i}, i=1,2$ which are satisfies
lemma \ref{universal estimates}.
Denote by $l_{0}=\max \{ \overline{l}, l \}.$  According to theorem  \ref{proposition 4.5} (i), the interior of the atom $\Delta^{n_{s}+l_{0}}_{f_1}(a^{1}_{k})$  contains only one jump point of $F_{n_{s}}$ the point $a_{k}$ hence, $F_{n_{s}}$ is continuous on each component  $\Delta^{n_{s}+l_{0}}_{f_1}(a^{1}_{k})\setminus \{a^{1}_k\}.$

If $|\log F_{n_{s}}(x)|\geq \delta_{0}$ on the left component of $\Delta^{n_{s}+l_{0}}_{f_1}(a^{1}_{k})\setminus \{a^{1}_k\},$ then we are done. If not, there exists at least one point $x$  on the left component of $\Delta^{n_{s}+l_{0}}_{f_1}(a^{1}_{k})\setminus \{a^{1}_k\}$ such that
\begin{eqnarray}\label{eq22}
|\log F_{n_{s}}(x)|<\delta_{0}.
\end{eqnarray}
Now, for any $y$ in the left component of $\Delta^{n_{s}+l_{0}}_{f_1}(a^{1}_{k})\setminus \{a^{1}_k\}$ we have
  \begin{eqnarray}\label{eq23}
|\log F_{n_{s}}(y)|\leq |\log F_{n_{s}}(x)|+|\log F_{n_{s}}(x)-\log F_{n_{s}}(y)|< \delta_{0}+\frac{\delta_{0}}{2}.
\end{eqnarray}
by lemma \ref{universal estimates} with $k=q_{n_{s}}$ and $l=l_{0}.$
Then in particular $|\log F_{n_{s}}(a^{1}_{k}-0)|<3\delta_{0}/2$ and
\begin{eqnarray}\label{eq24}
|\log F_{n_{s}}(a^{1}_{k}+0)|=|\log (\sigma_{F_{n_{s}}}(a^{1}_{k})F_{n_{s}}(a^{1}_{k}-0))|\geq \frac{3\delta_{0}}{2}.
\end{eqnarray}
Finally, for $y$ in  the right component of $\Delta^{n_{s}+l_{0}}_{f_1}(a^{1}_{k})\setminus \{a_k\},$ we have
\begin{eqnarray}\label{eq25}
|\log F_{n_{s}}(y)| \geq |\log (F_{n_{s}}(a^{1}_{k}+0))|-|\log (F_{n_{s}}(a^{1}_{k}+0))-\log F_{n_{s}}(y)|\geq \delta_{0}.
\end{eqnarray}
Hence on the right component of $\Delta^{n_{s}+l_{0}}_{f_1}(a^{1}_{k})\setminus \{a_k\},$ we have
$|\log F_{n_{s}}(y)| \geq \delta_{0}.$
\end{proof}
Now we consider second the case $\mu_{1}[a_{1}, b_{1}]\neq \mu_{2}[a_{2}, b_{2}]$. Without loss of generality, we can suppose that
 $\mu_{1}[a_{1}, b_{1}]<\mu_{2}[a_{2}, b_{2}]$ the opposite case can be handled similarly.  We choose $h$ such that $h(a_{1})=a_{2}.$
  Using this together with above inequality we get $h(b_{1})<b_{2}.$ Since $h$ is continuous and strictly increasing function, then there exist a unique point $c_{1}$ such that $b_{1}<c_{1}$ and $h(c_{1})=b_{2}.$ Using this it is easy to see the map $F_{n}$ has for jump points $a^{1}_{k}=f_{1}^{-k}(a_{1}),$ $b^{1}_{k}=f_{1}^{-k}(b_{1}),$  $c^{1}_{k}=f_{1}^{-k}(c_{1}).$
  In this case the jump points of $F_{n}$ obtained by adding some negative iterates of $c_{1}$ to the break points of  $f_{1}^{q_{n}}.$
  Moreover, the break points $c^{1}_{k}$ goes to the points $f_{2}^{-k}(b_{2})$ by $h$   i.e. $h(f_{1}^{-k}(c_{1}))=f_{2}^{-k}(b_{2}).$  The jumps of $F_{n}$ at these points are:
 $$
 \sigma_{F_{n}}(a^{1}_{k})=\frac{\sigma_{f^{q_{n}}_{2}}(h(a^{1}_{k}))}
 {\sigma_{f^{q_{n}}_{1}}(a^{1}_{k})}=\frac{\sigma_{f_{2}}(h(a_{1}))}
 {\sigma_{f_{1}}(a_{1})}=\frac{\sigma_{f_{2}}(a_{2})}
 {\sigma_{f_{1}}(a_{1})},
  $$
$$
 \sigma_{F_{n}}(b^{1}_{k})=(\sigma_{f^{q_{n}}_{1}}(b^{1}_{k}))^{-1}=
(\sigma_{f_{1}}(b_{1}))^{-1}, \,\,\,\
\sigma_{F_{n}}(c^{1}_{k})=\sigma_{f^{q_{n}}_{2}}(b^{2}_{k})=
\sigma_{f_{2}}(b_{2})
 $$
and by assumption theorem \ref{ADM1} implies that
$
\sigma_{F_{n}}(a^{1}_{k})\times \sigma_{F_{n}}(b^{1}_{s})\neq 1,$
$
\sigma_{F_{n}}(a^{1}_{k})\times  \sigma_{F_{n}}(c^{1}_{t}) \neq 1,$ and
$
\sigma_{F_{n}}(a^{1}_{k})\times \sigma_{F_{n}}(b^{1}_{s})\times
\sigma_{F_{n}}(c^{1}_{t})=1,
$
for all $0\leq k, s, t < q_{n}.$
  Let $n_{s}=n_{s}(f_{1})\in \mathbb{N}$ the subsequence such that break points of  $f_{1}^{q_{n_{s}}}$  far from each other.
  The main changes is in this case it is $F_{n_{s}}$ may not be continuous on
 one of the two connected components of
$\Delta_{f_{1}}^{n_{s}+l_{0}}(a^{1}_{k})\backslash\{a^{1}_{k}\}$ and one passes from every continuity interval of $F_{n_{s}}$ to the next one by multiplying $F_{n_{s}}$ by the jump at the common extremity of these two consecutive intervals.
Denote by $3\delta_{1}= \min \{|\log \sigma_{f_{2}}(a_{2})-\log\sigma_{f_{1}}(a_{1})+\log \sigma_{f_{2}}(b_{2})|, \, |\log \sigma_{f_{2}}(a_{2})-\log\sigma_{f_{1}}(a_{1})|\}.$ It is clear that $\delta_{1}$ is positive. Next, we will show that for any $0\leq k < q_{n_{s}}$ there exists a subinterval in $\Delta_{f_{1}}^{n_{s}+l_{0}}(a^{1}_{k})\}$ such that on this subinterval $|\log F_{n_{s}}|$ is  $\delta_{1}$- far from $0.$
\begin{prop}\label{proposition 4.7}
Assume the homeomorphisms  $f_{i}, i=1,2 $ satisfy the conditions of
Theorem \ref{ADM1}. Let $l_{0}$ be the constant which is defined
in proposition  \ref{proposition 4.6}.
\begin{description}
  \item[(i)] Then for any $0\leq k <q_{n_{s}}$ there exists a
 subinterval $I^{n_{s}}_{k}\subset \Delta_{f_{1}}^{n_{s}+l_{0}}(a^{1}_{k})$
  such that, on the interval $I^{n_{s}}_{k}$
the following inequality holds:
$$
|\log Df^{q_{n_{s}}}_{2}(h(x))- \log Df^{q_{n_{s}}}_{1}(x)|\geq
\delta_{1}.
$$

  \item[(ii)]There exists a universal constant $C_{9}=C_{9}(f_{1}, f_{2})>0$ such that the intervals $I^{n_{s}}_{k}$ and $\Delta_{f_{1}}^{n_{s}+l_{0}}(a^{1}_{k})$ are $C_{9}$- comparable.
\end{description}
\end{prop}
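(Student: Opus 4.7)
The plan is to adapt the argument of Proposition \ref{proposition 4.6} to the situation where $F_{n_s}$ has additional jump points at the iterates $c^{1}_{j}=f_{1}^{-j}(c_{1})$, each carrying jump $B:=\log\sigma_{f_{2}}(b_{2})$; write also $A:=\log\sigma_{f_{2}}(a_{2})-\log\sigma_{f_{1}}(a_{1})$, so that by hypothesis both $|A|\ge 3\delta_{1}$ and $|A+B|\ge 3\delta_{1}$. Fix $k\in\{0,\dots,q_{n_{s}}-1\}$ and set $\Delta:=\Delta_{f_{1}}^{n_{s}+l_{0}}(a^{1}_{k})$. By Theorem \ref{proposition 4.5}(i) applied to $f_{1}$, the only jump point of $F_{n_{s}}$ of the form $a^{1}_{j}$ or $b^{1}_{j}$ inside $\Delta$ is $a^{1}_{k}$ itself; however, $\Delta$ may still contain several $c^{1}_{j}$'s, which further subdivide it into continuity intervals of $F_{n_{s}}$.

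For part (i), take $l_{0}$ as in Proposition \ref{proposition 4.6}, enlarged if necessary so that Lemma \ref{universal estimates} gives oscillation of $\log F_{n_{s}}$ on each continuity interval of $F_{n_{s}}$ inside $\Delta$ bounded by some $\eta<\delta_{1}/2$. Let $J^{-}$, $J^{+}$ denote the continuity intervals immediately to the left and right of $a^{1}_{k}$, with (nearly constant) values $v^{\pm}$ of $\log F_{n_{s}}$. Since $v^{+}-v^{-}=A$, the pigeon-hole bound $\max(|v^{-}|,|v^{+}|)\ge|A|/2\ge 3\delta_{1}/2$ forces $|\log F_{n_{s}}|\ge\delta_{1}$ on one of $J^{\pm}$. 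If this ``good'' interval is $C_{9}$-comparable to $\Delta$ we take it as $I^{n_{s}}_{k}$. Otherwise, say $J^{+}$ is short because some $c^{1}_{j}$ sits immediately to the right of $a^{1}_{k}$; let $J^{++}$ be the continuity interval just beyond that $c^{1}_{j}$. Its value satisfies $v^{++}-v^{-}=A+B$, so the same pigeon-hole argument with $|A+B|\ge 3\delta_{1}$ yields $|\log F_{n_{s}}|\ge\delta_{1}$ on $J^{-}$ or $J^{++}$; the symmetric case of a $c^{1}_{j}$ immediately to the left of $a^{1}_{k}$ is handled analogously.

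The central difficulty is part (ii): showing that the chosen $I^{n_{s}}_{k}$ is indeed $C_{9}$-comparable to $\Delta$, which reduces to an a priori lower bound on the distance from $a^{1}_{k}$ to the nearest $c^{1}_{j}$ inside $\Delta$. Since $c_{1}=h^{-1}(b_{2})$ lies off the orbit of $a_{1}$ under $f_{1}$ (otherwise $b_{2}=h(c_{1})$ would lie on the orbit of $a_{2}$ under $f_{2}$, contradicting hypothesis (4)), the geometric arguments of Section 4 apply verbatim to the pair $(a_{1},c_{1})$. The key step is to extract a single subsequence, still denoted $n_{s}$, along which the barycentric coefficients of both $b_{1}$ and $c_{1}$ in their respective $\textbf{D}_{n_{s}}(a_{1},f_{1})$-atoms lie in $[C_{7}^{2},1-C_{7}^{2}]$: this follows by running the ``extremal interval'' argument of Proposition \ref{proposition 4.4} simultaneously for the two off-orbit points $b_{1}$ and $c_{1}$ (neither can be indefinitely in extremal subatoms without coinciding with an iterate of $a_{1}$, which is excluded). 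Theorem \ref{proposition 4.5} applied to both pairs $(a_{1},b_{1})$ and $(a_{1},c_{1})$ then yields, possibly after enlarging $l_{0}$ once more, that any two of the jump points $a^{1}_{j}$, $b^{1}_{j}$, $c^{1}_{j}$ inside $\Delta$ are at distance at least $C_{9}|\Delta|$ apart, and in particular that $J^{-}$, $J^{+}$, $J^{++}$ are each $C_{9}$-comparable to $\Delta$. Extracting this common subsequence and justifying the simultaneous application of Theorem \ref{proposition 4.5} to the auxiliary point $c_{1}$ is the principal technical obstacle.
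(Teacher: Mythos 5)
Your approach diverges from the paper's in a substantive way, and at the crucial step there is a gap you have not closed. Your idea is to apply the separation machinery of Proposition~\ref{proposition 4.4} and Theorem~\ref{proposition 4.5} to the auxiliary pair $(a_1,c_1)$ in addition to $(a_1,b_1)$, so that all the jump points $a^1_j,\,b^1_j,\,c^1_j$ of $F_{n_s}$ become uniformly separated inside the relevant atom. That reapplication is legitimate in principle: the proofs of Proposition~\ref{proposition 4.4} and Theorem~\ref{proposition 4.5} never use that the second point is a break point of $f_1$, only that it lies off the orbit of $a_1$, and your observation that $c_1$ is off the orbit of $a_1$ (via hypothesis (4) for $f_2$ and the conjugacy equation) is correct. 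The gap is in your claim that one can extract a \emph{single} subsequence $n_s$ along which the barycentric coefficients of \emph{both} $b_1$ and $c_1$ lie in $[C_7^2,1-C_7^2]$. Proposition~\ref{proposition 4.4}, applied separately to $b_1$ and to $c_1$, yields two infinite sets of good indices, but nothing in its proof forces those sets to intersect: the argument ``neither can be indefinitely in an extremal subatom'' rules out eventual extremality of each point individually, yet it is entirely compatible with $b_1$ being extremal precisely when $c_1$ is not and vice versa. Closing this would require a density-type strengthening of Proposition~\ref{proposition 4.4}, which is not available in the paper and which you do not supply; calling it ``the principal technical obstacle'' without resolving it leaves the proof incomplete.

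The paper avoids this problem altogether. It keeps the subsequence $n_s$ tied only to the pair $(a_1,b_1)$, passes to a deeper atom $\Delta_{f_1}^{n_s+l_1}(a^1_k)$ with $l_1\geq l_0$, and accepts that this atom may contain one point $c^1_t$. Writing $\Delta_{f_1}^{n_s+l_1}(a^1_k)\setminus\{a^1_k,c^1_t\}=L^{n_s}_k\cup M^{n_s}_k\cup R^{n_s}_k$, it observes that $L^{n_s}_k\cup M^{n_s}_k$ and $R^{n_s}_k$ are two \emph{consecutive} atoms of $\textbf{D}_{n_s+l_1}(a_1,f_1)$; Remark~\ref{con inter} then makes them comparable, so at least one of $L^{n_s}_k,\,M^{n_s}_k$ is $C_2C_6^2/2$-comparable to $R^{n_s}_k$, and $R^{n_s}_k$ is comparable to the whole atom by Theorem~\ref{proposition 4.5}(ii). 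The jump relations $\sigma_{F_n}(a^1_k)\sigma_{F_n}(b^1_s)\sigma_{F_n}(c^1_t)=1$ then give $|\log F_{n_s}|\geq\delta_1$ on both $L^{n_s}_k$ and $M^{n_s}_k$, so whichever of them is long can be taken as $I^{n_s}_k$. Thus the paper gets both (i) and (ii) with no need to control the location of the $c^1_j$ via the extremal-interval argument. Your part~(i) pigeonhole computation (using $|A|\geq 3\delta_1$ and $|A+B|\geq 3\delta_1$) is essentially the same arithmetic as the paper's inequalities~(\ref{eq26})--(\ref{eq28}), but your treatment of part~(ii) should be replaced by the consecutive-atom comparability argument above rather than the unproved simultaneous-subsequence claim.
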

\begin{proof}
Let us take a natural number $\widehat{l}$ such that
$C_{5}(f_{1})\lambda_{1}^{\widehat{l}/q}+C_{5}(f_{2})\lambda_{2}^{\widehat{l}/q}\leq \frac{\delta_{1}}{3},
$ where $C_{5}(f_{i})$ and $\lambda_{i}, i=1,2$  appropriate constants of $f_{i}, i=1,2$ which are satisfies universal estimates.
Denote by $l_{1}=\max \{\widehat{l}, l_{0} \}.$  Let $\Delta_{f_{1}}^{n_{s}+l_{1}}(a^{1}_{k})$ be the atoms of $\textbf{P}_{n+l_{1}}(a, f_{1})$ containing the points $a^{1}_{k}$ in its interior. It is clear that $\Delta_{f_{1}}^{n_{s}+l_{1}}(a_{k})\subset \Delta_{f_{1}}^{n_{s}+l_{0}}(a^{1}_{k})$ and by remark \ref{inter nisbat va expon rem}
 there exists a constant $\kappa_{1}=\kappa(f_{1})>0$ such that $|\Delta_{f_{1}}^{n_{s}+l_{1}}(a^{1}_{k})|\geq e^{-3v_{1}}\kappa^{l_{1}-l_{0}}_{1} |\Delta_{f_{1}}^{n_{s}+l_{0}}(a^{1}_{k})|$ for all $0\leq k<q_{n_{s}},$ where $v_{1}=Var_{S^{1}}\log Df_{1}.$
Now we will construct an intervals $I^{n_{s}}_{k}$ which is comparable with
$\Delta_{f_{1}}^{n_{s}+l_{1}}(a^{1}_{k}).$ For this we define following sets
$B_{n_{s}}=\{c^{1}_{k}: \,\ c^{1}_{k}=f^{-k}_{1}(c_{1}),\,\ 0\leq k <q_{n_{s}}\},$ \,\
$G_{n_{s}}=\{k: 0\leq k <q_{n}, \Delta_{f_{1}}^{n_{s}+l_{1}}(a^{1}_{k})\cap B_{n_{s}}\neq \emptyset \}$ and  $\overline{G}_{n_{s}}$ is complement of
$G_{n_{s}}.$ Let for definiteness $G_{n_{s}}$ is non empty.
If $k \in \overline{G}_{n_{s}}$
 then the atom
$\Delta_{f_{1}}^{n_{s}+l_{1}}(a_{k})$ contains only one jump point of $F_{n_{s}}$ the point $a_{k}$ hence $F_{n_{s}}$ is continuous on each component $\Delta_{f_{1}}^{n_{s}+l_{1}}(a^{1}_{k})\setminus \{a^{1}_{k}\}.$ Since
$\delta_{1}\leq \delta_{0}$ and $l_{0}\leq l_{1},$ according to proposition \ref{proposition 4.6}, on one of the two connected components of
$\Delta_{f_{1}}^{n_{s}+l_{1}}(a^{1}_{k})\backslash\{a^{1}_{k}\},$
the following inequality holds $|\log F_{n_{s}}(x)|\geq \delta_{1}.$ Denote by $I_{k}^{n_{s}}$ a component of $\Delta_{f_{1}}^{n_{s}+l_{1}}(a^{1}_{k})\backslash\{a^{1}_{k}\},$ such holds last inequality.
 By theorem \ref{proposition 4.5}  (ii) there exists
$C_{6}=C_{6}(f_{1})$ such that
$$
|I_{k}^{n_{s}}|\geq C_{6}|\Delta_{f_{1}}^{n_{s}+l_{1}}(a^{1}_{k})|\geq C_{6}\kappa^{l_{1}-l_{0}}_{1}e^{-3v_{1}}|\Delta_{f_{1}}^{n_{s}+l_{0}}(a^{1}_{k})|
$$
Now, let $k \in G_{n_{s}}.$ Then the atom
$\Delta_{f_{1}}^{n_{s}+l_{1}}(a^{1}_{k})$ contains two jump points of $F_{n_{s}}$ the point $a^{1}_{k}$ and an element $c^{1}_{t}$ of $B_{n_{s}}.$ Hence $F_{n_{s}}$ is continuous on each component $\Delta_{f_{1}}^{n_{s}+l_{1}}(a^{1}_{k})\setminus \{a^{1}_{k}, c^{1}_{t}\}=L^{n_{s}}_{k}\cup M^{n_{s}}_{k} \cup R^{n_{s}}_{k}.$ Let for definiteness the point $c^{1}_{t}$ lie
 on the left hand side of the point $a^{1}_{k}.$ Then $R^{n_{s}}_{k}$  is right  component of  $\Delta_{f_{1}}^{n_{s}+l_{1}}(a^{1}_{k})\setminus \{a^{1}_{k}\}$ using above arguments the intervals $R^{n_{s}}_{k}$ and $\Delta_{f_{1}}^{n_{s}+l_{0}}(a^{1}_{k})$ are $C_{6}\kappa^{l_{1}-l_{0}}_{1}e^{-3v_{1}}$- comparable.
 If on the interval $R^{n_{s}}_{k}$ holds this
$|\log F_{n_{s}}(x)|< \delta_{1}$ inequality, then we take $I_{k}^{n_{s}}=R^{n_{s}}_{k}$ and desired result follows obviously.
If not, there exists at least one point $z_{0}$ in $R^{n_{s}}_{k}$ such that
 $|\log F_{n_{s}}(z_{0})|\leq \delta_{1}.$
 It is easy to see for any $x \in L^{n_{s}}_{k}$ and $y \in M^{n_{s}}_{k}$ holds this inequalities:
\begin{eqnarray}\label{eq26}
  |\log F_{n_{s}}(x)-\log F_{n_{s}}(z_{0})|\geq
  \end{eqnarray}
  $$
    \geq|\log \sigma_{F_{n_{s}}}(c^{1}_{t}) \sigma_{F_{n_{s}}}(a^{1}_{k})|-3(C_{5}(f_{1})\lambda_{1}^{l_{1}/q}
    +C_{5}(f_{2})\lambda_{2}^{l_{1}/q}),
    $$
\begin{eqnarray}\label{eq27}
  |\log F_{n_{s}}(y)-\log F_{n_{s}}(z_{0})|\geq
    |\log \sigma_{F_{n_{s}}}(a^{1}_{k})|-2(C_{5}(f_{1})\lambda_{1}^{l_{1}/q}
    +C_{5}(f_{2})\lambda_{2}^{l_{1}/q}).
    \end{eqnarray}
It is clear that the intervals $L^{n_{s}}_{k}\bigcup M^{n_{s}}_{k}$ and $R^{n_{s}}_{k}$ are two consecutive intervals of
 $D_{n_{s}+l_{1}}.$ By remark \ref{con inter}
 at least one of the intervals $L^{n_{s}}_{k}$ and $ M^{n_{s}}_{k}$ are $C_{2}C_{6}^{2}/2$ - comparable with $R^{n_{s}}_{k}.$
  First, let the interval $L^{n_{s}}_{k}$ be $C_{2}C_{6}^{2}/2$ - comparable with  $R^{n_{s}}_{k}.$
  Using (\ref{eq26}) for any point $x\in L^{n_{s}}_{k}$ we get
\begin{eqnarray}\label{eq28}
  |\log F_{n_{s}}(x)|\geq
    |\log \sigma_{F_{n_{s}}}(c^{1}_{t}) \sigma_{F_{n_{s}}}(a^{1}_{k})|-3(C_{5}(f_{1})\lambda_{1}^{l_{1}/q}
    +C_{5}(f_{2})\lambda_{2}^{l_{1}/q})-\delta_{1}.
\end{eqnarray}
Of the determine integer $l_{1}$ and positive $\delta_{1},$ implies
$|\log \sigma_{F_{n_{s}}}(c^{1}_{t}) \sigma_{F_{n_{s}}}(a^{1}_{k})|=|\log \sigma_{f_{2}}(a_{2})-\log\sigma_{f_{1}}(a_{1})+\log \sigma_{f_{2}}(b_{2})|\geq 3\delta_{1}$ and $3(C_{5}(f_{1})\lambda_{1}^{l_{1}/q}+C_{5}(f_{2})\lambda_{2}^{l_{1}/q})\leq \delta_{1}.$ The last two equations together with (\ref{eq28}) imply
$|\log F_{n_{s}}(x)|\geq \delta_{1}$ for any point $x\in L^{n_{s}}_{k}.$
If we take $I_{k}^{n_{s}}=L^{n_{s}}_{k}$ then the intervals $I_{k}^{n_{s}}$
and $\Delta_{f_{1}}^{n_{s}+l_{0}}(a^{1}_{k})$ are $C_{9}=2^{-1}C_{2}C_{6}^{3}\kappa^{l_{1}-l_{0}}_{1}e^{-3v_{1}}$ - comparable.
Secondly, let the interval $M^{n_{s}}_{k}$ be $C_{2}C_{6}^{2}/2$ - comparable with $R^{n_{s}}_{k}.$
Similarly we can show that
$|\log F_{n_{s}}(y)|\geq \frac{4\delta_{1}}{3}$ for any point $y\in M^{n_{s}}_{k}.$ In this case, if we take $I_{k}^{n_{s}}=M^{n_{s}}_{k}$
again the intervals $I_{k}^{n_{s}}$
and $\Delta_{f_{1}}^{n_{s}+l_{0}}(a^{1}_{k})$ are $C_{9}$ - comparable.
\end{proof}
\section{Proof of main theorem}
\begin{proof}Assume that the homeomorphisms  $f_{i}, i=1,2 $ satisfy the conditions of theorem \ref{ADM1}.
By lemma \ref{Zaruriy} the conjugation map $h$ between $f_{1}$ and $f_{2}$ is either absolutely continuous
or singular function. Suppose  $h$  is  absolutely continuous function. Then by theorem \ref{theor 3.1.} for all $\epsilon >0$
 there exists such natural number  $n_{0}$ such that for $n>n_{0}$ holds this
\begin{eqnarray}\label{eq29}
\ell(x: \, |\log Df^{q_{n}}_{2}(h(x))- \log Df^{q_{n}}_{1}(x)|\geq
\delta)<\epsilon
\end{eqnarray}
inequality for any $\delta>0.$
First, we assume $\mu_{1}[a_{1}, b_{1}]=\mu_{2}[a_{2}, b_{2}]$ where $\mu_{i}, \,\ i=1,2$ are invariant measures of $f_{i}.$
In this case jump  points of $F_{n}$ appear break points of $f_{1}^{q_{n}}.$
 Apply theorem \ref{proposition 4.5} to the function $f_{1}$ we can find sufficiently large $n_{s}=n_{s}(f_{1})>n_{0}$ such that
 the break points of  $f_{1}^{q_{n_{s}}}$  far from each other.
  Let $\delta_{0}=\frac{1}{3}|\log \sigma_{f_{2}}(a_{2})-\log \sigma_{f_{1}}(a_{1})|$  and  $l_{0}=l_{0}(f_{1}, f_{2})$  be the natural number  which is defined in proposition \ref{proposition 4.6}.
   By proposition \ref{proposition 4.6} on one of the two connected components of $\Delta_{f_{1}}^{n_{s}+l_{0}}(a^{1}_{k})\setminus \{a^{1}_{k}\},$ we have
$|\log Df^{q_{n_{s}}}_{2}(h(x))- \log Df^{q_{n_{s}}}_{1}(x)|\geq \delta_{0}.$ By theorem \ref{proposition 4.5}  (ii) there exists $C_{6}=C_{6}(f_{1})$
 such that the length of this component greater then $C_{6}|\Delta_{f_{1}}^{n_{s}+l_{0}}(a^{1}_{k})|.$ Hence, for a suitable subsequence $n_{s}$:\\

$
\ell \big (x: \, |\log Df^{q_{n_{s}}}_{2}(h(x))- \log Df^{q_{n_{s}}}_{1}(x)|\geq \delta_{0}\big)\geq
$
 \begin{eqnarray}\label{eq30}
 \geq \ell \big(x\in \bigcup_{k=0}^{q_{n_{s}}-1}\Delta_{f_{1}}^{n_{s}+l_{0}}(a^{1}_{k}): |\log Df^{q_{n_{s}}}_{2}(h(x))- \log Df^{q_{n_{s}}}_{1}(x)|\geq
\delta_{0}\big)\geq
 \end{eqnarray}
$$
\geq C_{6}\ell \big(\bigcup_{k=0}^{q_{n_{s}}-1}\Delta_{f_{1}}^{n_{s}+l_{0}}(a^{1}_{k})\big)=
C_{6} \sum_{k=0}^{q_{n_{s}}-1}|\Delta_{f_{1}}^{n_{s}+l_{0}}(a^{1}_{k})|.
$$
Using remark \ref{inter nisbat va expon rem}  we get
\begin{eqnarray}\label{eq31}
\sum_{k=0}^{q_{n_{s}}-1}|\Delta_{f_{1}}^{n_{s}+l_{0}}(a^{1}_{k})|
=\sum_{k=0}^{q_{n_{s}}-1}\frac{|\Delta_{f_{1}}^{n_{s}+l_{0}}(a_{k})|}
{|\Delta_{f_{1}}^{n_{s}-1}(a_{k})|}|\Delta_{f_{1}}^{n_{s}-1}(a_{k})|\geq
 \end{eqnarray}
\begin{flushright}
$
 \geq e^{-3v_{1}}\kappa^{l_{0}+1}_{1}\underset{k=0}{\overset{q_{n_{s}}-1}{\sum}}|\Delta_{f_{1}}^{n_{s}-1}(a_{k})|.
$
\end{flushright}
By property \ref{proper1} two consecutive atoms of $\textbf{P}_{n_s}(a_{1}, f_{1})$ are $C_{2}=C_{2}(f_{1})$ -comparable
and using this fact we get
\begin{eqnarray}\label{eq32}
\sum_{k=0}^{q_{n_{s}}-1}|\Delta_{f_{1}}^{n_{s}-1}(a_{k})|=
\frac{\underset{k=0}{\overset{q_{n_{s}}-1}{\sum}}|\Delta_{k}^{n_{s}-1}(a_{1})|}
{\underset{k=0}{\overset{q_{n_{s}}-1}{\sum}}|\Delta_{k}^{n_{s}-1}(a_{1})|+
\underset{k=0}{\overset{q_{n_{s}}-1}{\sum}}|\Delta_{k}^{n_{s}}(a_{1})|}\geq\frac{C_{2}}{1+C_{2}}.
\end{eqnarray}
Using (\ref{eq30}), (\ref{eq31}) and (\ref{eq32}) we get:
\begin{eqnarray}\label{eq33}
  \ell \big (x: |\log Df^{q_{n_{s}}}_{2}(h(x))- \log Df^{q_{n_{s}}}_{1}(x)|\geq \delta_{0}\big)\geq \frac{\kappa_{1}^{l_{0}+1}C_{6}C_{2}}{e^{3v_{1}}(1+C_{2})}.
\end{eqnarray}
Now we consider the second case $\mu_{1}[a_{1}, b_{1}]\neq\mu_{2}[a_{2}, b_{2}].$ Suppose $\mu_{1}[a_{1}, b_{1}]<\mu_{2}[a_{2}, b_{2}]$ the opposite case can be handled similarly. Let $\delta_{1}$ - be the positive number which is defined in proposition \ref{proposition 4.7}.
Using by proposition \ref{proposition 4.7} for any $0\leq k<q_{n_{s}}$ there exits a subinterval $I^{n_{s}}_{k}\subset \Delta_{f_{1}}^{n_{s}+l_{0}}(a^{1}_{k})$
  such that, on the interval $I^{n_{s}}_{k}$ hold the following inequality
$$
|\log Df^{q_{n_{s}}}_{2}(h(x))- \log Df^{q_{n_{s}}}_{1}(x)|\geq
\delta_{1}
$$
 and the intervals $I^{n_{s}}_{k}$ and $\Delta_{f_{1}}^{n_{s}+l_{0}}(a^{1}_{k})$ are $C_{9}$ - comparable. Using similar arguments as above we get:\\

$
\ell \big (x: |\log Df^{q_{n_{s}}}_{2}(h(x))- \log Df^{q_{n_{s}}}_{1}(x)|\big)\geq $
\begin{eqnarray}\label{eq34}
\geq \ell \big(x\in \bigcup_{k=0}^{q_{n_{s}}-1}I_{k}^{n_{s}}: |\log Df^{q_{n_{s}}}_{2}(h(x))- \log Df^{q_{n_{s}}}_{1}(x)|\geq
\delta_{1}\big)\geq \frac{\kappa_{1}^{l_{0}+1}C_{9}C_{2}}{e^{3v_{1}}(1+C_{2})}.
\end{eqnarray}
If we take $\delta=\delta_{1}$ and
$$
\epsilon= \min \{\frac{\kappa_{1}^{l_{0}+1}C_{6}C_{2}}{2e^{3v_{1}}(1+C_{2})}, \,\ \frac{\kappa_{1}^{l_{0}+1}C_{9}C_{2}}{2e^{3v_{1}}(1+C_{2})}\},
$$
 then it is  a contradiction to (\ref{eq29}).
\end{proof}
\section{Acknowledgements}
The first author is  grateful to Universiti Kebangsaan Malaysia for providing financial support via the grants UKM-MI-OUP-2011(13-00-09-001).
The second  author A. Dzhalilov visited to the Universiti Kebangsaan Malaysia  by grant UKM-DIP-2012-31
and the Stonebrook University by Fulbright grant.
He is very grateful to the Universiti Kebangsaan Malaysia and the Stonebrook University for the warm hospitality
and  financial supports. The authors also thank K. Khanin, M. Luybich and M. Martens for useful discussions.

\end{document}